\documentclass[12pt]{article}

%\pdfoutput=1

\usepackage[margin=1.3in]{geometry}
\usepackage[
  bookmarks=true,
  bookmarksnumbered=true,
  bookmarksopen=true,
  pdfborder={0 0 0},
  breaklinks=true,
  colorlinks=true,
  linkcolor=black,
  citecolor=black,
  filecolor=black,
  urlcolor=black,
]{hyperref}
\usepackage{isomath}
\usepackage{amsmath,amsfonts,amssymb,amsthm,enumitem}
\usepackage{mathtools}
\usepackage[capitalize]{cleveref}
\usepackage{nicefrac}

\usepackage{tikz}
\usetikzlibrary{shapes,backgrounds}

\theoremstyle{plain}
\newtheorem{theorem}{Theorem}[section]

\newtheorem{lemma}[theorem]{Lemma}
\newtheorem{claim}[theorem]{Claim}
\newtheorem{example}[theorem]{Example}
\newtheorem{corollary}[theorem]{Corollary}
\newtheorem*{question*}{Question}
\theoremstyle{definition}

\providecommand{\Z}{\mathbb{Z}}

\newcommand{\N}{\mathbb{N}}
\newcommand{\R}{\mathbb{R}}
\providecommand{\eps}{\epsilon}

\newif\ifdraft
\drafttrue

\ifdraft
\newcommand{\shay}[1]{{\color{red}{Shay:~#1}}}

\newcommand{\new}[1]{{{#1}}}
\else
\newcommand{\shay}[1]{}

\fi

\title{On weak $\eps$-nets and the Radon number}
\author{Shay Moran\thanks{Department of Computer Science, Princeton University, Princeton, USA
\texttt{shaym@cs.princeton.edu}. Part of this  research was done while the author was at the Institute for Advanced Study where he was supported by NSF grant CCF-1412958.} \and
Amir Yehudayoff\thanks{Department of Mathematics, Techion-IIT.
\texttt{amir.yehudayoff@gmail.com}. Research supported by ISF grant 1162/15.}}

\begin{document}

\date{}

\maketitle

\begin{abstract}
We show that the Radon number characterizes the existence of weak nets
in separable convexity spaces
(an abstraction of the Euclidean notion of convexity).
The construction of weak nets when the Radon number is finite
is based on Helly's property and on metric properties of VC classes.
The lower bound on the size of weak nets when the Radon number is large relies on 
the chromatic number of the Kneser graph.
As an application, we prove an amplification result for weak $\eps$-nets.
\end{abstract}

\section{Introduction}

Weak and strong $\eps$-nets were defined by Haussler and Welzl as a tool
for fast processing of geometric range queries~\cite{haussler1986epsilon}.
They have been consequently studied in many areas, including
computational geometry, combinatorics, and machine learning,
and they were used in many algorithmic applications,
including range searching and geometric optimization.

An $\eps$-net is a set that pierces all large sets in a given family of sets.
Formally, let $\mu$ be a probability distribution over a domain $X$
and let\footnote{Here and below
we assume that all sets considered are measurable.} $C \subseteq 2^X$
be a family of sets.
A subset $S$ of $X$ is called a {\em weak $\eps$-net} for $C$ over $\mu$
if $S \cap c \neq \emptyset$ for every $c\in C$ with $\mu(c) \geq \eps$.
A subset $S$ is called a {\em strong $\eps$-net} if in addition
$S$ is contained in the support of $\mu$.
We say that $C$ has weak/strong $\eps$-nets 
of size $\beta=\beta(C,\eps)$
if for every distribution $\mu$
there is a weak/strong $\eps$-net for $C$ over $\mu$
of size at most $\beta$
(we stress that $\beta$ may depend on $\eps$, but not on $\mu$).

To illustrate the difference between weak and strong nets,
consider the uniform distribution on $n$ points
on the unit circle in the plane $X=\R^2$,
and the family $C$~to be all convex hulls of subsets of these $n$ points.
Any strong $\eps$-net must contain at least $(1-\eps)n$ points,
but there are weak $\eps$-nets of size $O(\alpha(\eps)/\eps)$,
where $\alpha(\cdot)$ is the inverse Ackermann function~\cite{Alon08weak};
using points inside the unit disc allows to use significantly less points.
In general, weak nets may be much smaller than strong nets.

The main question we address is under what conditions
do weak nets exist.
This question for strong $\eps$-nets is fairly well understood;
the fundamental theorem of statistical learning, which shows that
the VC dimension characterizes PAC learnability, 
also shows that the VC dimension characterizes the existence of strong nets
(see~\cite{haussler1986epsilon,shalev2014understanding} and references within).
We show that the Radon number
characterizes the existence of weak nets in a pretty general setting (viz.~separable
convexity spaces). 

\subsubsection*{Weak nets}

Weak $\eps$-nets were mostly studied in the context of discrete and convex geometry,
where they are related to several deep phenomena.
For example, Alon and Kleitman~\cite{alon1992piercing} used weak nets in their famous solution of the
$(p,q)$-conjecture by Hadwiger and Debrunner~\cite{hadwiger1957variante}.

\new{Barany, Furedi, and Lovasz~\cite{barany1990number} showed that
convex sets in the plane admit weak nets, 
and Alon, Barany, Furedi, and Kleitman~\cite{alon1992point} established a bound of $O(1/\eps^2)$ on their size.}
Recently Rubin improved this bound to $O(1/\eps^{3/2 + \gamma})$,
where~$\gamma>0$ is arbitrarily small~\cite{Rubin18improved}.
Alon, Barany, Furedi, and Kleitman~\cite{alon1992point} 
extended the existence of weak nets for convex sets to all dimensions $d$;
there are weak $\eps$-nets of size at most roughly $(1/\eps)^{d+1}$.
Their proof relies on several results from convex geometry, like
Tverberg's theorem and the colorful Caratheodory theorem.
Chazelle, Edelsbrunner, Grigni, Guibas, Sharir, and Welzl~\cite{chazelle1993improved} later improved the bound 
to at most roughly $(1/\eps)^d$.
Overall, there are at least three different constructions
of weak nets for convex sets.
Matousek~\cite{matousek02lower} showed that any weak $\eps$-net for convex sets in $\R^d$ 
must \new{contain at least $\Omega(\exp(\sqrt{d/2}))$ points} for $\eps\leq1/50$. 
Later, Alon~\cite{alon12lower} proved the first lower bound that is superlinear in $1/\eps$;
this was later improved to $\Omega\bigl((1/\eps)\log^d(1/\eps)\bigr)$ by Bukh, Matousek, and Nivasch~\cite{bukh2011lower}.
Ezra~\cite{Ezra10} constructed weak $\eps$-nets for the more restricted class of axis-parallel boxes in $\R^d$.
Alon, Kalai, Matousek, and Meshulam~\cite{alon2002transversal}
defined weak nets in an abstract setting,
and asked about combinatorial conditions that yield their existence.

We identify that the existence of weak nets
follows from a basic combinatorial property of convex sets,
Radon's theorem:
 
\begin{theorem}[Radon]
Any set of $d+2$ points in $\R^d$  can be partitioned into
two disjoint subsets whose convex hulls intersect.
\end{theorem}

We show that this property alone is sufficient and necessary 
for the existence of weak nets in a general setting, 
which we describe next.

It is worth mentioning that Radon's theorem 
also plays a central role in the context of strong nets.
Indeed, it implies that the VC dimension of
half-spaces in $\R^d$ is at most $d+1$,
which consequently bounds the VC dimension
of many geometrically defined classes.

\subsubsection*{Convexity spaces}

We consider \new{an abstraction} of Euclidean convexity that originated
in a paper by Levi~\cite{levi1951helly}, and defined in the form \new{presented} here
by Kay and Womble~\cite{kay1971axiomatic}. 
For a thorough introduction to this subject see
the survey by Danzer, Grunbaum, and Klee~\cite{danzer1963helly} 
or the more recent book by van de Vel~\cite{van1993theory}.

A {\em convexity space} is a pair $(X,C)$ where $C \subseteq 2^X$ is a
family of subsets that satisfies\footnote{\new{Note that van de Vel~\cite{van1993theory} also requires that the union of an ascending chain of convex sets is convex.}}:
\begin{itemize}
\item $\emptyset,X\in C$.
\item $C$ is closed under intersections\footnote{We use the standard notation
$\cap C' = \bigcap_{c \in C'} c$.}: $\cap C' \in C$ for every $C'\subseteq C$.
\end{itemize}
The members of $C$ are called \emph{convex sets}.
The \emph{convex-hull} of a set $Y \subseteq X$, denoted by $conv(Y) = conv_C(Y)$, is
the intersection of all convex sets $c \in C$ that contain $Y$.
A convex set $b\in C$ is called a \emph{half-space} if its complement is also convex.

%\shay{The added assumption seems standard (I saw it in a few other papers) and crucial. 
%Otherwise the convex-hull of $Z\subseteq X$ 
%is not necessarily well defined (e.g.\ in case no convex set contains $Z$).
%Moreover, our lower bound is incorrect without this assumption.
%This is because, under the previous definition, 
%the Radon number of $([n],2^{[n]})$ would be infinite
%but clearly it has finite $\eps$-nets.
%An alternative fix would be use a different definition
%for the Radon number (e.g.\ the one in the discussion),
%but I think it is better to be consistent with previous definitions of 
%convexity spaces.  
%}

We next define the notion of separability, which is an abstraction of the
hyperplane separation theorem (and the more general Hahn-Banach theorem).
The convexity space $(X,C)$ is \emph{separable}, if for every $c\in C$ and $x \in X \setminus c$
there exists a half-space $b \in C$ so that $c \subseteq b$ and $x \not \in b$.
It can be verified that $(X,C)$ is separable if and only if 
every convex set $c\in C$ is the intersection of all
half-spaces containing it. 
%See Appendix~\ref{ap:sep} for a proof of the equivalence between
%these definitions.
%In this paper it is technically more convenient to use the latter definition of separability.
This form of separability, as well as other forms,
have been extensively studied (e.g.~\cite{GrunbaumMotzkin61,hammer1955,hammer1961semispaces,Klee56,Chepoi1994}).
%;
%\new{for a detailed treatment of this subject we refer the reader to the book~\cite{van1993theory}.}

Convexity spaces appear in many contexts in mathematics. 
For instance, the family of closed subsets in a topological space, 
the subgroups of a given groups, and the subrings of a ring are all examples\footnote{One should sometimes add the empty set in order to satisfy all axioms of a convexity space.} of convexity spaces.
They are also closely related to the notion of $\pi$-systems in probability theory.
In Section~\ref{sec:Example} below we discuss a few examples of convexity spaces that arise in algebra and combinatorics.

%We will use the following basic lemma,
%which says that every separable convexity space is generated by its half-spaces.
%\begin{lemma}
%Let $(X,C)$  be a separable convexity space. 
%Then, every $c\in C$ is equal to the intersection of all half-spaces
%containing $c$.
%\end{lemma}
%\begin{proof}
%Clearly $c$ is a subset of this intersection.
%In the other direction, if $x\notin c$, then by separability
%there is a half-space $h$ that separates $\{x\}$ and $c$.
%\end{proof}
%In fact, also the other implication holds; 
%namely, separability is equivalent to that the half-spaces
%generate the space by taking intersections.

\subsubsection*{Main results}

The combinatorial property that characterizes the existence
of weak nets is the Radon number~\cite{levi1951helly,kay1971axiomatic},
{which is an abstraction of Radon's theorem:
we say that $C$ \emph{Radon-shatters} a set $Y\subseteq X$ if for every partition of $Y$ 
into two parts $Y_1,Y_2$ it holds that $conv(Y_1)\cap conv(Y_2)=\emptyset$.
The {\em Radon number} of $(X,C)$ is
%\footnote{The Radon number was also defined as the minimum $r$ so that every set of size $r$ can be partitioned into two parts whose convex hulls intersect (e.g.~\cite{womble1970axiomatic}). The two definitions yield the same number for every space with one exception, the space $(X,2^X)$ with $|X| < \infty$. For this space the Radon number is $\lvert X\rvert +1$ by our definition, and $\infty$ by the other} definition.}  
the minimum number~$r$ such that $C$ does not Radon-shatter \new{any} set of size~$r$.
%{That is, for every $Y \subseteq X$ of size $r$ there is a partition of $Y$
%to $Y_1,Y_2$ so that $conv(Y_1) \cap conv(Y_2) \neq \emptyset$.}
%The Radon number of a convexity space $(X,C)$ is defined as 
%the minimum number $r$
%such that every $Y\subseteq X$ of size at least $r$
%can be partitioned into two parts whose convex hulls have a non-empty
%intersection. 
%If no such $r$ exists then the Radon number is $\infty$.
Radon's theorem states that the Radon number of 
the space of convex sets in $\R^d$ is at most~$d+2$.

%Amir: I did not understand the example - every subset of size $n+1$ in $[n]$ can be partitioned
%to two parts so that...}
%
%\shay{As I understand, the statement you wrote: that every subset of size $n+1$ is Radon-shattered is indeed valid, 
%but does not imply a lower bound on the Radon number. This is because there is no subset of size $n+1$ of $[n]$.
%(It is similar to the fact that the VC dimension of $\{0,1\}^n$ is $n$ although every subset of size $n+1$ is shattered by it.)}

\begin{theorem}\label{thm:main}
Let $(X,C)$ be a finite separable convexity space.
\begin{enumerate}
\item If the Radon number of $(X,C)$ is at most $r$ then
it has weak $\eps$-nets of size at most $(120 r^2/\eps)^{4r^2\ln(1/\eps)}$ for every $\eps > 0$.
\item 
If the Radon number of $(X,C)$ is more than $r$
then there is a distribution $\mu$ over $X$ such 
that every $\frac{1}{4}$-net for $C$ over $\mu$ has size at least $r/2$.
\end{enumerate}
\end{theorem}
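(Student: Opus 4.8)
The plan is to extract two structural consequences of a bounded Radon number and then deploy them in opposite directions for the two parts. First I would record that if the Radon number is at most $r$, then the family of half-spaces has VC dimension at most $r-1$: if half-spaces shatter a set $Y$, then for every partition $Y = Y_1 \sqcup Y_2$ there is a half-space $b$ with $Y_1 \subseteq b$ and $Y_2 \subseteq X\setminus b$, whence $conv(Y_1)\subseteq b$ and $conv(Y_2)\subseteq X\setminus b$ are disjoint; thus $Y$ is Radon-shattered and $|Y|\le r-1$. Second, I would bound the Helly number by $r-1$ via the classical minimal-counterexample argument: a minimal subfamily with empty intersection, of size $k$, yields points $p_1,\dots,p_k$ with $p_i$ in all members but the $i$-th; if $k\ge r$ these points admit a Radon partition, which forces a common point of the whole subfamily, a contradiction, so $k\le r-1$.

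For part (1) I would build the weak net by a recursive, centerpoint-based tree construction driven by these two facts. The engine is a \emph{centerpoint lemma}: applying Helly to the half-spaces of $\nu$-measure greater than $1-1/h$ (any $h$ of them intersect since their complements have total measure $<1$) produces a point $p$ lying in every half-space of measure $>1-1/h$, hence in every convex set of measure $>1-1/h$ by separability. At each node I would add such a $p$ for the current conditional measure $\nu$, and then recurse: if a heavy convex set $c$ does not contain $p$, separability gives a half-space $b\supseteq c$ with $p\notin b$, so $\nu(b)\le 1-1/h$, and conditioning on $b$ multiplies the conditional measure of $c$ by a factor $\ge 1/(1-1/h)$. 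To keep the branching finite I would replace the half-spaces by a $\gamma$-cover in the $L_1(\nu)$ metric — this is the ``metric property of VC classes'', of size $\mathrm{poly}(1/\gamma)^{r}$ by Haussler's packing bound — and recurse on conditioning by each cover element. Since each descent amplifies the measure of $c$ by a constant factor, after depth $L=O(r\ln(1/\eps))$ its conditional measure exceeds $1-1/h$, so the centerpoint there hits it. The net is the set of all centerpoints, of size at most $N^{L}$ with $N$ the branching factor, giving $(120 r^2/\eps)^{4 r^2\ln(1/\eps)}$ for a suitable $\gamma\approx \eps/r$.

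The main obstacle in part (1) is the error analysis of the covering step: a cover element $b_j$ only approximates the true half-space $b\supseteq c$, so $c$ may protrude from $b_j$ by up to $\gamma$ in measure, and these losses accumulate across the $L$ levels. I would control this by taking $\gamma$ small enough (roughly $\gamma\ll \eps/L$) that the surviving conditional mass of $c$ stays bounded away from $0$, so the multiplicative amplification still drives it above $1-1/h$; balancing $\gamma$, the cover size, and the depth against the target bound is the delicate part.

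For part (2) I would take a Radon-shattered set $Y$ of size $r$ and let $\mu$ be uniform on $Y$. Radon-shattering yields two facts: $conv(A)\cap Y = A$ for every $A\subseteq Y$ (otherwise a point of $Y\setminus A$ lies in $conv(A)\cap conv(Y\setminus A)$), so $\mu(conv(A))=|A|/r$; and for every point $s$ the family $\{A\subseteq Y : s\in conv(A)\}$ is \emph{intersecting} (disjoint $A_1,A_2$ would give $s\in conv(A_1)\cap conv(Y\setminus A_1)=\emptyset$). Setting $k=\lceil r/4\rceil$, each $A\in\binom{Y}{k}$ has $\mu(conv(A))=k/r\ge 1/4$ and must be pierced, yet each net point pierces only an intersecting subfamily of $\binom{Y}{k}$, i.e.\ an independent set of the Kneser graph on $k$-subsets of $Y$. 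Covering all of $\binom{Y}{k}$ therefore needs at least $\chi$ net points, where $\chi=r-2k+2\ge r/2$ is the chromatic number of that Kneser graph by the Lovasz--Kneser theorem; hence every $\frac{1}{4}$-net has at least $r/2$ points. I expect this part to be clean, the only subtlety being to choose $k$ so that both the measure threshold ($k/r\ge 1/4$) and the chromatic bound ($r-2k+2\ge r/2$) hold at once.
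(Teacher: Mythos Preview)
Your proposal is correct and follows essentially the same route as the paper. Part~(1) matches the paper's construction: bound the VC dimension and Helly number of the half-spaces by $r$, use Helly to produce a centerpoint $x_0$ piercing every half-space of measure $>1-1/h$, and recurse by conditioning on elements of a Haussler $\delta$-cover of the half-spaces; Part~(2) matches the paper's lower bound via the uniform distribution on a Radon-shattered $r$-set and the Lov\'asz--Kneser chromatic number.

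One small remark on your ``main obstacle'' in Part~(1): the error from replacing the true half-space $b$ by a cover element does \emph{not} need to be tracked cumulatively across levels. The paper takes $\delta=\eps/(2h)^2$ (with $\eps$ the current threshold at that node) and checks directly that the conditional measure of $c$ on the cover element is at least $(1+1/(2h))\eps$; this amplification factor is the same at every level, so the recursion depth is simply $O(h\ln(1/\eps))$ with no additional balancing needed. Your suggestion $\gamma\approx\eps/r$ is essentially this choice, but your alternative $\gamma\ll\eps/L$ is more cautious than necessary.
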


We often refer to a construction of small weak $\eps$-nets
as an {\em upper bound}, and to a proof that no small weak $\eps$-nets
exists as a {\em lower bound}.
The upper bound in 1 above is quantitively worse than the one for Euclidean convex sets~\cite{chazelle1993improved},
but holds in a more general setting.
We do not know what is the optimal bound in this generality.

A possible interpretation of the upper bound is that the existence
of weak nets is not directly related to ``geometric'' properties of the underlying space
(as in the various constructions surveyed above).
This is somewhat surprising:
consider a family $C$ and a distribution $\mu$
such that $C$ has no strong $\eps$-net
with respect to $\mu$; in order to construct a weak $\eps$-net
we should have a mechanism that suggests ``good points''
outside the support of $\mu$. In Euclidean geometry 
there are such natural choices, like ``center of mass''.
We notice that the Radon number provides such a mechanism
(see Section~\ref{sec:overview}).

%We break the proof of Theorem~\ref{thm:main} into two parts, as discussed in Section~\ref{sec:overview}.}

%\subsubsection*{Characterizations}}

We next discuss conditions
that are equivalent to the existence of small $\eps$-nets.
It is convenient to present these equivalences for infinite spaces.
Quantitative variants of these statements apply to finite spaces as well.

We use the following standard notion of compactness; a family $C$ is  
\emph{compact} if for every $C' \subseteq C$
so that $\cap C' = \emptyset$ there is a finite $C'' \subseteq C'$
so that $\cap C'' = \emptyset$.
This condition is satisfied e.g.\ by closed sets in a compact topological space.

\begin{corollary}[Equivalences]
\label{cor:EQ}
The following are equivalent for a compact separable convexity space $(X,C)$:
\begin{enumerate}
\item $(X,C)$ has a finite Radon number.
\item $(X,C)$ has weak $\eps$-nets of finite size for some $0 < \eps <1/2$.
\item $(X,C)$ has weak $\eps$-nets of finite size for every $\eps >0$.
\end{enumerate}
\end{corollary}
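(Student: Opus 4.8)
The plan is to prove the cycle $3\Rightarrow 2\Rightarrow 1\Rightarrow 3$, using Theorem~\ref{thm:main} as a black box and transferring it from finite to compact spaces by a compactness argument organized around the \emph{half-space reformulation} of the Radon number. The implication $3\Rightarrow 2$ is immediate, since condition~3 provides finite weak $\eps$-nets for every $\eps>0$, in particular for some $\eps\in(0,1/2)$. The key preliminary observation, which I would record first, is that in a separable convexity space a set $Y$ is Radon-shattered if and only if the family of half-spaces shatters $Y$ in the VC sense: by separability $conv(Y_1)\cap conv(Y_2)=\emptyset$ is equivalent to the existence of a half-space $b$ with $Y_1\subseteq b$ and $Y_2\cap b=\emptyset$, so $Y$ is Radon-shattered exactly when every subset of $Y$ is cut out by a half-space. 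Hence the Radon number equals one plus the VC dimension of the half-space family. This matters because the naive trace space $C|_F=\{c\cap F:c\in C\}$ does \emph{not} control the Radon number (for $n$ points of $\R^2$ in convex position $C|_F=2^F$, whose Radon number is infinite), whereas the VC dimension of half-spaces is well behaved under passing to finite subsets.

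The workhorse of the reduction is what I would call the \emph{cell model}. Given finitely many half-spaces $b_1,\dots,b_N$ of $(X,C)$, take as ground set one representative point per nonempty atom of the arrangement of the $b_i$, and as convex sets the intersections of the $b_i$. Two properties must be checked. First, the half-spaces of this finite space are exactly the $b_i$ and their complements (fixing two or more sign coordinates yields a set whose complement is not an intersection of coordinate half-spaces), so by the reformulation its Radon number is at most that of $(X,C)$. Second, every $x\in X$ lies in a unique cell, and $x\in\bigcap_{i\in I}b_i$ if and only if the cell of $x$ lies in the corresponding convex set of the model; thus piercing in $X$ factors through the cells. This second property is exactly what neutralizes weak-net points lying outside any prescribed finite set.

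For $1\Rightarrow 3$, given $\mu$ and $\eps$ I would use compactness of $C$ together with the finite VC dimension $r-1$ of the half-space family to reduce to piercing finitely many convex sets, each represented (via separability) as an intersection of finitely many half-spaces $b_1,\dots,b_N$. Applying the first part of Theorem~\ref{thm:main} to the resulting cell model, whose Radon number is at most $r$, yields a weak $\eps$-net of size at most $(120r^2/\eps)^{4r^2\ln(1/\eps)}$ made of cells; picking a point of $X$ in each such cell gives a weak $\eps$-net for $(X,C)$ over $\mu$ of the same size, uniformly in $\mu$. For $2\Rightarrow 1$ I would argue by contraposition: if the Radon number is infinite then for every $r$ the half-spaces shatter some $Y$ with $|Y|=r$; building the cell model on half-spaces witnessing this shattering keeps $Y$ shattered, so its Radon number exceeds $r$. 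The second part of Theorem~\ref{thm:main} then supplies a distribution on the model with no $\tfrac14$-net of size below $r/2$; lifting the mass to the representative points and using that piercing in $X$ factors through cells yields a distribution over $X$ with the same lower bound, contradicting any finite bound as $r\to\infty$. For the threshold $\eps_0\in(0,1/2)$ of condition~2 this is immediate when $\eps_0\le\tfrac14$, and for $\eps_0\in(\tfrac14,\tfrac12)$ one invokes that the Kneser-graph lower bound behind Theorem~\ref{thm:main} runs at any threshold below $\tfrac12$.

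The main obstacle, and the step I would treat most carefully, is precisely this finite reduction. Naive restriction destroys the Radon number, and weak nets are subtle exactly because net points may lie outside the support of $\mu$; both difficulties are resolved simultaneously by the cell model. The crux is therefore verifying its two defining properties—that its half-spaces are only the chosen $b_i$, so that its Radon number is inherited from $(X,C)$, and that piercing by arbitrary points of $X$ factors through its cells—and then confirming that finitely many half-spaces, obtained through compactness of $C$ and the finiteness of the VC dimension of half-spaces, suffice to capture every convex set of $\mu$-measure at least $\eps$.
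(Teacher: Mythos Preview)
Your proposal takes a much harder route than the paper and contains a genuine gap.

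The paper's proof of $1\Rightarrow 3$ is one line: Lemma~\ref{lem:Radon} bounds both the VC dimension and the Helly number of the half-space family $B$ by the Radon number, and Theorem~\ref{thm:weakeps} is stated and proved for an arbitrary \emph{compact} (not finite) family $B$. Compactness enters exactly once in that proof, to guarantee that the intersection of all ``large'' half-spaces is nonempty. No reduction to a finite model is needed. Likewise, $2\Rightarrow 1$ is immediate from Lemma~\ref{lem:LB}, which is stated for arbitrary convexity spaces with no finiteness, compactness, or separability hypotheses.

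Your $1\Rightarrow 3$ rests on a finite reduction that is not justified. You assert that compactness of $C$ together with finite VC dimension lets you ``reduce to piercing finitely many convex sets, each represented as an intersection of finitely many half-spaces.'' But the paper's compactness is only the finite-intersection property for empty intersections of members of $C$; it does not say that a uniform bound on transversals of all finite subfamilies of $\{c\in C:\mu(c)\ge\eps\}$ yields a transversal of the whole family (that would require a compact topology on $X^\beta$ in which convex sets are closed, which is not assumed). Nor does separability make an arbitrary convex set a \emph{finite} intersection of half-spaces. Without these two steps you cannot invoke Theorem~\ref{thm:main}, which is stated for finite spaces, and the argument collapses.

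A secondary issue: your ``half-space reformulation'' claims that $Y$ is Radon-shattered iff the half-spaces VC-shatter $Y$. The paper only proves one direction (Lemma~\ref{lem:Radon}). The converse needs a single half-space separating two disjoint convex hulls, whereas the separability axiom here only separates a convex set from a \emph{point}; upgrading point-separation to set-separation is not automatic in abstract convexity. For $1\Rightarrow 3$ this can be circumvented (since cell-model convex sets lie in $C$, Radon-shattering in the cell model implies Radon-shattering in $C$ directly), but your $2\Rightarrow 1$ genuinely uses the questionable direction to keep $Y$ Radon-shattered inside the finite model. The easy fix is to skip the cell model and quote Lemma~\ref{lem:LB}, exactly as the paper does.
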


The proof of Corollary~\ref{cor:EQ} appears in Section~\ref{sec:corEQ}.
It shows that the role of the Radon number in the existence of weak nets 
is similar to the role of the VC dimension in the existence of strong nets,
at least for separable compact convexity spaces.

In Section~\ref{sec:NecAssum}, we provide an example 
showing that the compactness assumption in Corollary~\ref{cor:EQ}
is necessary. We do not know if the separability assumption is necessary.

The implication $2 \Rightarrow 3$ in Corollary~\ref{cor:EQ} is an amplification
statement for the parameter~$\eps$  in weak nets.
In Section~\ref{sec:boost} we give an example showing that
the threshold $1/2$ in item 2 is sharp for amplification of weak nets:
\begin{example}
\label{ex:boost}
%
%\begin{itemize}
%\item {Let $(X,C)$ be a compact separable convexity space.}
%If $C$ has weak $\eps$-nets of finite size for some $\eps<\frac{1}{2}$
%then it has weak $\eps$-nets of finite size for every $\eps > 0$.
There is a compact separable {convexity} space that has
weak $\eps$-nets of finite size for every $\eps > 1/2$ 
but has no weak $\eps$-nets of finite size for $\eps<\frac{1}{2}$.
%\end{itemize}
\end{example}
%\shay{Do you know what is the status for $\eps=1/2$? Amir: I don't think so.}

This demonstrates an interesting difference with
strong $\eps$-nets, for which there is no such threshold:
if $C$ has strong $\eps$-nets for some $\eps<1$
then it has finite VC dimension, which implies
{the existence of} strong $\eps$-net for all $\eps >0$.

%In Section~\ref{sec:ext} below, we discuss an extension of Theorem~\ref{thm:main} and Corollary~\ref{cor:EQ}
%that carries to convexity spaces that are not necessarily compact or separable (like {bounded} convex sets in $\R^d$).}

%
%The constant $1/4$ can be replaced by say $0.49$,
%but as we explain at the end of Section~\ref{sec:ontheconst} it can not be replaced by a constant larger than $1/2$.
%In other words, this boosting exhibits a threshold around $1/2$;
%weak $0.49$-nets of size $k$ can be boosted to weak $\eps$-nets of size at most $\beta(\eps,k)$
%for all $\eps > 0$, but weak $0.51$-nets can not.

%
%As detailed in Section~\ref{sec:overview}, the proofs of Theorem~\ref{thm:main} and Corollary~\ref{cor:EQ}
%are derived via a few lemmas, which may be of independent interest.

\subsubsection*{Organization}

In Section~\ref{sec:Example} we provide some examples
of convexity spaces.
In Section~\ref{sec:overview} we outline 
the construction that leads to the upper bound in Theorem~\ref{thm:main},
in Section~\ref{sec:LBintro} we outline the lower bound in Theorem~\ref{thm:main},
and in Section~\ref{sec:NecAssum} we provide some examples
that demonstrate the necessity of some of our assumptions.

In Section~\ref{sec:construction} we prove the upper bound,
and in Section~\ref{sec:RadonLB}  we prove the lower bound.
In Section~\ref{sec:cors} we prove the {characterizations (Corollary~\ref{cor:EQ}),} % and~\ref{cor:boost}, 
and 
in Section~\ref{sec:ext} we discuss an extension
to convexity spaces that are not necessarily separable or compact (like {bounded} convex sets in $\R^d$).
Finally, in Section~\ref{sec:discussion} we conclude the paper and offer some directions for future research.

\subsection{Some convexity spaces}
\label{sec:Example}
We now present a few examples of ``non Euclidean'' convexity spaces.
These examples will be used later on to show that some of our theorems/lemmas
are tight in the sense that each premise is necessary.
More examples can be found in the book~\cite{van1993theory}.

\vspace{0.3cm}
\noindent {\bf Example 1 (power set).}
Let $X$ be a set. Perhaps the simplest convexity space is~$(X,2^{X})$.
Here every convex set is a half-space and therefore this space is separable.
When $X$ is finite, the Radon number of this space is $\lvert X\rvert + 1$.
When $X$ is infinite, the Radon number is $\infty$.

\vspace{0.3cm}
\noindent {\bf Example 2 (subgroups).}
Let $G$ be a group with identity $e$.
The space $(G\setminus\{e\}, \{H\setminus\{e\} : H\leq G \})$
of all subgroups of $G$ (with the identity removed)
is a convexity space. 
Here, $Y\subseteq G$ is Radon-shattered,
if every two disjoint subsets of~$Y$ generate groups
whose intersection is $\{e\}$.

\vspace{0.3cm}
\noindent {\bf Example 3 (cylinders).}
Let $X = \{0,1\}^n$,
and let the $C$ be the family of \emph{cylinders}:
a set $c\subseteq \{0,1\}^n$ is called a cylinder if there exists $Y\subseteq [n]$,
and $v\in\{0,1\}^Y$ such that 
\[c = \bigl\{u\in\{0,1\}^n : u|_Y = v\bigr\}.\]
The size of $Y$ is called the co-dimension of the cylinder.
%We leave to the reader to verify that
This is a separable convexity space.
Its half-spaces are the cylinders with co-dimension $1$,
and its Radon number is $\Theta(\log n)$.

\vspace{0.3cm}
\noindent {\bf Example 4 (subtrees).}
Let $T=(V,E)$ be a finite tree. Consider the convexity space $(V,C)$, where
\[C = \bigl\{U\subseteq V : \text{the induced subgraph on $U$ is connected}\bigr\}.\]
%Since $T$ is a tree, it follows that $C$ is closed under intersection
It is a separable convexity space and its Radon number is at most
%\footnote{The Radon number is 4, unless the tree is a path.} 
$4$. 
Theorem~\ref{thm:main} hence implies the existence of weak nets
of size depending only on $\eps$
(in this case there are elementary constructions of $\eps$-nets of size~$O(1/\eps)$).
%
%
%$C$ is separable: indeed if $U',U''\in C$ are disjoint, then by removing
%any edge on the shortest path between $U'$ and $U''$
%we get a partition of $V$ into two half-spaces that separates
%$U'$ and $U''$.
%
%The Radon number of $C$ is at most $4$:
%indeed, let $a,b,c,d\in V$; if $conv_C(\{a,b\})\cap conv_C(\{c,d\})= \emptyset$
%then, by separability, there is an edge $e$ whose removal
%disconnects $\{a,b\}$ from $\{c,d\}$. Therefore, this edg
%is contained in $conv_C(\{a,c\})\cap conv_C(\{b,d\})$,
%which in particular is not empty. 
This example is a special case of \emph{geodesic convexity}
in metric spaces (see~\cite{van1993theory}).

%graphs called 
%median graphs (see for example~\cite{Mulder78,MulderS79,BandeltV91}).
%The family of \emph{geodesically convex sets} in median graphs
%form a separable convexity space~\cite{BandeltV91}.

\vspace{0.3cm}
\noindent {\bf Example 5 (convex lattice sets).}
Consider the space $(\Z^d, C)$, where $C$ is the family of \emph{convex lattice sets} in $\R^d$;
these are sets of the form {$K \cap \Z^d$ for some convex~$K \subseteq \R^d$.}

This is a separable
%\footnote{\new{We remark that this space does not satisfy the stronger notion of separability between disjoint convex sets: 
%$\{v\in\{0,1\}^d : \|v\|_1 \text{ is even}\}$ and $\{v\in\{0,1\}^d : \|v\|_1 \text{ is odd}\}$ are disjoint convex sets that can not be separated by a pair of complementing half-space.}} 
%I do not see why these sets are convex
convexity space. {Indeed, 
let $c = K \cap \Z^d \in C$ and $x \in \Z^d \setminus c$.
%We may take $K$ to be the convex hull of $c$ in $\R^d$.
Since $x \not \in c$ it follows that $x \not \in K$.
and therefore there is a half-space in $\R^d$ separating~$x$ and $K$. 
This half-space induces a half-space in $(\Z^d, C)$.}

%there exists a half-space $b \in C$ so that $c \subseteq b$ and $x \not \in b$.
Onn~\cite{Onn91} proved that the Radon number of
this space is at most $O(d2^{d})$ {and at least $2^d$.} 
%(since, for example $\{0,1\}^d\subseteq \Z^d$ is Radon-shattered)}.
%He does not prove that {0,1}^d is shattered
Our results imply that weak $\eps$-nets %of size at most $\beta(\eps,d)$
exist in this case as well.

Note that the family of half-spaces has VC dimension $d+1$,
which is much smaller than the Radon number. 
Thus, Theorem~\ref{thm:weakeps} (which we state in the next subsection) 
gives better bounds on the size of the $\eps$-net than Theorem~\ref{thm:main}. 

\vspace{0.3cm}
\noindent {\bf Example 6 (linear extensions of posets).}
Let $\Omega$ be a set. 
%Define $X$ to be the family of 
%linear orders on $\Omega$. 
For a partial order $P$ on $\Omega$, 
let $c(P)\subseteq X$ denote the set of all linear orders that
extend $P$.
Fix a partial order $P_0$, and
consider the family $C$ of 
all sets of the form $c(P)$, where
$P$ is a partial order that extends $P_0$.
The space $\bigl(c(P_0),C\bigr)$ is a separable convexity space whose half-spaces
correspond to partial orders defined
by taking two $P_0$-incomparable elements 
$x,y\in \Omega$ and extending $P_0$ by setting $x<y$.

\subsection{Upper bound}
\label{sec:overview}
%We start by a simple observation that if $C$ is separable
%then $C$ is generated by its half-spaces by taking intersections.
%Namely, every $c\in C$ is the intersection of all half-spaces
%containing $c$. Indeed, clearly $c$ is a subset of this intersection.
%In the other direction, if $x\notin C$, then by separability
%there is a half-space $h$ that separates $\{x\}$ and $c$
%(recall that $\{x\}$ is convex for every $x\in X$).

\subsubsection*{{From Radon to Helly and VC}}
Let $(X,C)$ be a separable convexity space and let $B$ denote the family of half-spaces of $C$
(the notation $B$ is chosen to reflect that $B$ generates all convex sets in $C$ by taking intersection, and hence
can be seen as a \emph{basis}).

We first observe that the Radon number is an upper bound on the {Helly number}
and the {VC dimension} of $B$.
The {\em Helly number} of a family $B$ is the minimum number $h$ such that
every finite\footnote{The finiteness assumption can be removed when $B$ is compact.} $B'\subseteq B$ 
with $\cap B'=\emptyset$ \new{contains a subfamily $B''$ with
at most $h$ sets such that already the intersection of the sets in  $B''$ is empty}.
Helly theorem states that the Helly number of half-spaces in $\R^d$ is at most $d+1$.
The {\em VC dimension} of $B \subseteq 2^X$ is the supremum over~$v$ for which
there exists $Y \subseteq X$ of size $|Y|=v$ so that for every $Z\subseteq Y$
there is a member of the family that contains $Z$ and is disjoint
from $Y\setminus Z$.

%Call $B \subseteq 2^X$ a family of {\em half-spaces}
%if for every finite $Y \subseteq X$ and for every $b \in B$
%there is $\bar b \in B$ so that $b \cap Y$ and $\bar b \cap Y$
%form a partition of $Y$ to two parts.
%This is in analogy to real half-spaces.

\begin{lemma}
\label{lem:Radon}
Let $B$ the family of half-spaces of a separable convexity space $C$.
If the Radon number of $C$ is $r$
then the VC dimension and the Helly number of $B$ are less than $r$.
\end{lemma}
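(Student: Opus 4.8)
The plan is to prove the two bounds separately, both resting on one preliminary observation: Radon-shattering is closed under taking subsets. Indeed, if $Y'\subseteq Y$ and $Y$ is Radon-shattered, then any partition $Y'=Y'_1\sqcup Y'_2$ extends to a partition of $Y$ (dumping $Y\setminus Y'$ into one side), whose disjoint hulls contain $conv(Y'_1)$ and $conv(Y'_2)$ respectively. Hence if $C$ Radon-shatters no set of size $r$, it Radon-shatters no set of size $\ge r$, so every Radon-shattered set has size at most $r-1$. Both bounds will follow by exhibiting, from a large shattered/witnessing configuration, a Radon-shattered set.

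For the VC dimension I would show that any set shattered by $B$ is Radon-shattered by $C$. Let $Y$ be shattered and fix an arbitrary partition $Y=Y_1\sqcup Y_2$. Applying shattering with $Z=Y_1$ yields a half-space $b\in B$ with $Y_1\subseteq b$ and $Y_2\cap b=\emptyset$. Since $b$ is a half-space, its complement $b^c$ is convex and contains $Y_2$, so $conv(Y_1)\subseteq b$ and $conv(Y_2)\subseteq b^c$ are disjoint. As the partition was arbitrary, $Y$ is Radon-shattered, whence $|Y|\le r-1$ and the VC dimension is less than $r$. This part is routine.

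The Helly bound is the crux, and I would obtain it by adapting the classical derivation of Helly's theorem from Radon's theorem. I claim the stronger statement, proved by strong induction on $n=|B'|$: if $B'$ is a family of $n$ half-spaces in which every subfamily of size $\le r-1$ has a common point, then $\cap B'\neq\emptyset$; this immediately gives Helly number $\le r-1$. The base case $n\le r-1$ is trivial. For $n\ge r$, the induction hypothesis applied to each $B'\setminus\{b_i\}$ produces a point $x_i\in\bigcap_{j\neq i}b_j$. If the $x_i$ are not all distinct, say $x_i=x_j$, then $x_i$ already lies in $b_i$ (witnessed by $x_j$, since $i\neq j$) and in all $b_k$ with $k\neq i$, hence in $\cap B'$, and we are done. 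Otherwise $\{x_1,\dots,x_n\}$ has size $n\ge r$, so it is not Radon-shattered: there is a partition $[n]=I_1\sqcup I_2$ and a point $x\in conv(\{x_i:i\in I_1\})\cap conv(\{x_i:i\in I_2\})$. For any index $i$, say $i\in I_1$, every $x_j$ with $j\in I_2$ has $j\neq i$ and so lies in $b_i$; since $b_i$ is convex, $conv(\{x_j:j\in I_2\})\subseteq b_i$ and therefore $x\in b_i$. As this holds for every $i$, we conclude $x\in\cap B'$.

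The main obstacle — beyond setting the induction up so that it proves Helly number $\le r-1$ rather than something weaker — is the possibility that the chosen witnesses $x_i$ coincide, which would block a direct appeal to the Radon property for an $n$-element set. The short argument above (two equal witnesses already force a global common point) disposes of this cleanly, after which the non-Radon-shattering of a set of size $n\ge r$ supplies the common point. I expect no further difficulties; note that separability is not actually used here, only that half-spaces and their complements are convex.
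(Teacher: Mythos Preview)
Your proof is correct and follows essentially the same approach as the paper: for the VC bound you use that a half-space and its convex complement separate any prescribed partition of a shattered set, and for the Helly bound you run Levi's classical Radon-to-Helly argument (points $x_i$ in the all-but-one intersections, then a Radon partition yields a common point). The only cosmetic difference is that the paper packages the Helly argument via a minimal empty-intersection subfamily rather than by induction on $|B'|$, and states it for all of $C$ rather than just $B$; the content is identical.
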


The bound on the Helly number follows from a result by Levi~\cite{levi1951helly},
and the bound on the VC dimension is straightforward.
The proof appears in Appendix~\ref{ap:Radon}.

%
%The lemma is proved in Section~\ref{sec:RadonProof}.
%In a nutshell, the VC dimension is less than the Radon number of $B$
%as if $Y\subseteq X$ is shattered by half-spaces
%then every partition of $B$ can be separated
%by complementing half-spaces, implying that the 
%corresponding convex hulls are disjoint,
%%by the halfspace property, and
%and the Helly number is less than the 
%Radon number by Levi's theorem~\cite{levi1951helly},
%which we state and prove in Section~\ref{sec:RadonProof} for completeness.

Lemma~\ref{lem:Radon} reduces the construction of weak nets for separable convexity spaces
to the following, more general construction.
%Recall that $B$ is called 
%a \emph{compact-family} if for every $B' \subseteq B$
%so that $\cap B' = \emptyset$ there is a finite $B'' \subseteq B'$
%so that $\cap B'' = \emptyset$.}

\begin{theorem}\label{thm:weakeps}
Let $X$ be a set, let $B\subseteq 2^X$ be a compact family
with $VC$ dimension~$v$ and Helly number $h$,
and let $B^\cap$ denote the family generated by taking arbitrary
intersections of members of $B$.
Then $B^{\cap}$ has weak $\eps$-nets of size at most $\beta<(120 h^2/\eps)^{4hv\ln(1/\eps)}$ for every $\eps>0$.
\end{theorem}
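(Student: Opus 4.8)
The plan is to combine a Helly-based notion of a \emph{centerpoint} with a recursive conditioning scheme whose branching is controlled by the metric entropy of the VC class $B$.

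First I would record the centerpoint lemma. For a finite measure $\nu$ on $X$, any $h$ sets $b_1,\dots,b_h\in B$ with $\nu(b_i)>(1-\tfrac1h)\nu(X)$ satisfy $\nu(b_1\cap\cdots\cap b_h)>0$ by a union bound, so they meet; by the Helly property and compactness the whole subfamily $\{b\in B:\nu(b)>(1-\tfrac1h)\nu(X)\}$ has a common point $p(\nu)$. The use of this is the following reduction. Suppose I can produce a \emph{small} family $\mathcal R$ of measurable ``cells'' such that every $c\in B^\cap$ with $\mu(c)\ge\eps$ is \emph{concentrated} in some $R\in\mathcal R$, meaning $\mu(c\cap R)>(1-\tfrac1h)\mu(R)$. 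Writing $c=\cap_{b\in\mathcal H}b$ with $\mathcal H\subseteq B$, concentration forces $\mu(b\cap R)\ge\mu(c\cap R)>(1-\tfrac1h)\mu(R)$ for every $b\supseteq c$, so $p(\mu|_R)\in b$ for all such $b$ and hence $p(\mu|_R)\in c$. Thus $\{p(\mu|_R):R\in\mathcal R\}$ is a weak $\eps$-net, and the whole problem reduces to building a small $\mathcal R$ realizing concentration.

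Next I would build $\mathcal R$ as the node set of a conditioning tree, rooted at $R=X$, whose purpose is to drive the conditional density $\mu(c\mid R)$ of each heavy $c$ above $1-\tfrac1h$ along some branch. At a node $R$ where $\mu(c\mid R)\le 1-\tfrac1h$, I want to pass to a child $R\cap b$ for a half-space $b\supseteq c$ whose conditional measure is bounded away from $1$, since then $\mu(c\mid R\cap b)=\mu(c\mid R)/\mu(b\mid R)$ is amplified multiplicatively while $c$ is retained. The branching cannot range over all of $B$, so instead at each node I would form a $\gamma$-net of $B$ in the pseudometric $d_{\mu|_R}(b,b')=\mu(b\triangle b'\mid R)$; by the standard packing bound for VC classes its size is at most $(C/\gamma)^{O(v)}$, and I would set $\gamma\approx\eps/\mathrm{poly}(h)$ so that the per-node base is $O(h^2/\eps)$. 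If each step amplifies the density of a not-yet-concentrated heavy set by a factor $1+\Omega(1/h)$, then a branch reaches concentration within depth $D=O(h\ln(1/\eps))$ (and I discard nodes with $\mu(R)$ below a threshold, which cannot concentrate a heavy set). The total number of nodes is then at most $\bigl((C/\gamma)^{O(v)}\bigr)^{D}=(O(h^2/\eps))^{O(hv\ln(1/\eps))}$, matching the claimed $\beta<(120h^2/\eps)^{4hv\ln(1/\eps)}$ after bookkeeping of constants.

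The hard part will be the per-step amplification guarantee, which is the technical heart and must rest on Helly rather than on any geometry. Concretely I must show that whenever $\mu(c\mid R)\le 1-\tfrac1h$ there exists a half-space $b\supseteq c$ with $\mu(b\mid R)\le 1-\Omega(1/h)$ (so that its $\gamma$-net approximation $\tilde b$ still satisfies $\mu(c\setminus\tilde b\mid R)\le\gamma$ and $\mu(\tilde b\mid R)\le 1-\Omega(1/h)+\gamma$, yielding a genuine multiplicative boost). This is delicate: a single half-space containing a convex-like $c$ can shave off only a vanishing fraction of the complement as the density tends to $1$, which is exactly why I keep the target threshold at the constant $1-\tfrac1h$ and never enter that regime. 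Establishing the existence of such a half-space in the abstract setting should follow from the fractional Helly phenomenon attached to a family of Helly number $h$, and getting the dependence on $h$ right is what controls the depth $D$. The second, more routine, difficulty is to bound the accumulation of the additive $\gamma$-approximation errors across the $O(h\ln(1/\eps))$ levels; since every surviving cell keeps conditional density at least $\eps$, choosing $\gamma$ smaller than $\eps$ by a $\mathrm{poly}(h)$ factor makes each error negligible against the $\Omega(\eps/h)$ per-step gain, so the amplification survives all the way to the threshold.
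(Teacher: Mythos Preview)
Your architecture matches the paper's proof almost exactly: a Helly-based centerpoint at each node, recursion into children given by a $\delta$-cover of $B$ under the conditional measure (this is Haussler's packing lemma, with $\delta=\eps/(2h)^2$), multiplicative amplification of $\mu(c\mid R)$ by a factor $1+\Theta(1/h)$ per step, and depth $O(h\ln(1/\eps))$. The bookkeeping you describe recovers the stated bound.

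However, you have misidentified the ``hard part.'' You write that you must show: whenever $\mu(c\mid R)\le 1-\tfrac1h$ there exists a half-space $b\supseteq c$ with $\mu(b\mid R)\le 1-\Omega(1/h)$, and you propose to extract this from a fractional Helly phenomenon. That implication is simply false. Take $h=2$ and $c=b_1\cap b_2$ where each $b_i$ has conditional measure $0.99$ but their intersection has measure $\le \tfrac12$; if $b_1,b_2$ are the only half-spaces containing $c$, no light half-space above $c$ exists. No fractional Helly argument will produce one.

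The step you need is not a lemma but a tautological dichotomy, and this is exactly what the paper uses. Write $c=\bigcap_{b\in B,\,b\supseteq c} b$. Either \emph{every} such $b$ satisfies $\mu(b\mid R)>1-\tfrac1h$, in which case your centerpoint $p(\mu|_R)$ lies in each of them and hence in $c$, so you stop at this node \emph{regardless of the value of $\mu(c\mid R)$}; or \emph{some} $b\supseteq c$ has $\mu(b\mid R)\le 1-\tfrac1h$, and that $b$ (with the sharp constant, not merely $\Omega(1/h)$) is precisely the amplifying half-space you want. Your error was taking ``$\mu(c\mid R)>1-\tfrac1h$'' as the termination criterion; the correct criterion along the branch tracking $c$ is ``$c$ is generated by heavy half-spaces at $R$.'' The former implies the latter, so your depth bound via $\mu(c\mid R)\to 1-\tfrac1h$ remains valid, but the latter is what makes the dichotomy automatic. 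With this correction the proof goes through with no input beyond Helly, compactness, and the VC packing bound; fractional Helly plays no role.
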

%
%Theorem~\ref{thm:weakeps} and Lemma~\ref{lem:Radon} imply the 
%first part of Theorem~\ref{thm:main}.
%The proof yields (without careful optimization) a bound
%$$\beta < (120 h^2/\eps)^{4hv\ln(1/\eps)}.$$
%This bound is worse than the one for euclidean convex sets,
%but holds in a more general setting.
%We do not know what is the optimal bound in this generality.

We next give an overview of
the proof of Theorem~\ref{thm:weakeps};
the complete proof appears in Section~\ref{sec:construction}.

% as well as  
%two examples that demonstrates the necessity of
%bounded VC dimension and bounded Helly number.

%\subsubsection*{Asymptotic dependence}
%
%There is a large gap between the lower bound in Lemma~\ref{lem:LB},
%and the upper bound in Corollary~\ref{cor:UB}.
%it is worth noting that a significant gap is necessary: 
%indeed, for some convexity spaces the upper bound is tighter
%while for other the lower bound is tighter.
%An example for a convexity space where the upper bound is tighter
%is the space of convex sets in $\R^d$. 
%Here the Radon number is $d+2$, and the minimum size
%of an $\eps$-net is roughly $(1/\eps)^d$.
%An example where the lower bound is tighter
%is the space\footnote{Note that this space is not separable; however it is \emph{locally-separable} and therefore, as explained in Section~\ref{sec:ext} our bounds also apply to it.} $([n],C)$ where $C = \{c\subseteq [n] : \lvert c\rvert\leq d\}$.
%Elementary arguments show that the Radon number here is $d+1$
%and that there are $\eps$-nets of size $O(d/\eps)$.

%\subsection{On the construction}
%\label{sec:ontheconst}

\subsubsection*{Outline of construction}

The construction of weak nets underlying Theorem~\ref{thm:weakeps} is short and simple.
We want to pierce all convex sets $c \in C$ such that $\mu(c) \geq \eps$.
We distinguish between two cases.
The simpler case is when $c$ can be written as
the intersection of half-spaces, each of which has $\mu$-measure more than $1-1/h$.
By Helly's property, it follows that there is a single point
that pierces all such $c$'s.
In the complementary case, when~$c$ can not be written in this way,
we use Haussler's packing lemma~\cite{haussler1995sphere} and show that there is
a small collection $A \subseteq B$ such that conditioning $\mu$ on a single~$a \in A$ 
increases the measure of $c$ by a factor of at least $1+1/(2h)$.
The size of $A$ can be bounded from above in terms of the VC dimension of $B$.
So, constructing a set that pierces all $c$'s with measure at least $\eps$
is reduced to constructing a bounded number of nets for larger density $\eps' \geq (1+1/(2h))\eps$.

To summarize, the Helly number yields the theorem for $\eps$ close to $1$,
and the VC dimension allows to keep increasing the density until
the Helly number becomes relevant.

Going back to the discussion after Theorem~\ref{thm:main} concerning the mechanism that suggests ``good points'' we see that this mechanism is based on the Helly property
(roughly speaking, the Helly property is a mechanism that given a collection of sets
outputs a point).

\subsection{Lower bound}
\label{sec:LBintro}
%For the lower bound in Theorem~\ref{thm:main}, and the 
%``finite Radon number $\implies$ weak nets'' direction in Corollary~\ref{cor:EQ}
%we 

The following lemma is a slight generalization of the lower bound in Theorem~\ref{thm:main}
(we do not assume separability or compactness, and replace $1/4$ by any $\eps>0$).

\begin{lemma}
\label{lem:LB}
Let $(X,C)$ be a convexity space.
If the Radon number of $C$ is greater than $r>0$ 
then there is a distribution $\mu$ on $X$
so that every weak $\eps$-net for $C$ over~$\mu$
has size at least $(1-2\eps)r$.
\end{lemma}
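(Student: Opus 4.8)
The plan is to exhibit a single distribution $\mu$ that is hard to pierce, built from a set that the convexity space almost Radon-shatters. Since the Radon number of $C$ is greater than $r$, by definition there exists a set $Y \subseteq X$ with $|Y| = r$ that is Radon-shattered: for every partition of $Y$ into two parts $Y_1, Y_2$, we have $conv(Y_1) \cap conv(Y_2) \neq \emptyset$. I would take $\mu$ to be the uniform distribution on $Y$, so each point carries mass $1/r$.

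The heart of the argument is to produce, for each ``large'' subset of $Y$, a convex set of measure at least $\eps$ that a small net must fail to hit. Concretely, fix any candidate net $S$ of size less than $(1-2\eps)r$. I want to find a partition $Y = Y_1 \sqcup Y_2$ and use one of the two convex hulls $conv(Y_1), conv(Y_2)$ as a set to pierce. The key point is that Radon-shattering gives a point $p$ in $conv(Y_1) \cap conv(Y_2)$; the trick is to choose the partition so that the heavier part has measure at least $\eps$ while its convex hull avoids $S$. First I would argue that if $S$ is small, then $S \cap Y$ misses many points of $Y$; I want to split $Y$ so that each part keeps enough of its own points out of $S$. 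The natural move is to consider, for a threshold subset, the convex hull of the points of $Y$ not covered by $S$ and show it must have large measure yet contain a witness point outside $S$.

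The cleanest route is probably a counting/averaging argument over partitions. I would consider a random balanced-ish partition of $Y$ (or a carefully chosen one based on how $S$ interacts with $Y$) and show that with the Radon witness $p \in conv(Y_1) \cap conv(Y_2)$, at least one of $conv(Y_1)$ or $conv(Y_2)$ is a convex set of $\mu$-measure $\geq \eps$ that is disjoint from $S$. The measure of $conv(Y_i)$ is at least $|Y_i \cap \operatorname{supp}|/r$, so I need each part to retain roughly $\eps r$ points of $Y$, which is feasible when $|S| < (1-2\eps)r$ because then $|Y \setminus S| > 2\eps r$, leaving room to distribute more than $\eps r$ uncovered points into each side. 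The assignment of points of $S \cap Y$ can be made so that the uncovered points split evenly across the two hulls.

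The main obstacle will be controlling which convex set $S$ actually pierces: a point of $S$ lying inside $conv(Y_i)$ but not in $Y_i$ itself could ruin disjointness, since weak nets may place points anywhere in $X$. The Radon witness $p$ is exactly such an ``interior'' point, so I cannot naively claim $conv(Y_i) \cap S = \emptyset$. The resolution is to flip the roles: rather than requiring $S$ to miss a large hull, I use the fact that if $S$ is too small it cannot meet all the convex sets $\{conv(Y \setminus \{s\}) : s \in S\}$ or a suitably chosen family of large hulls indexed so that each element of $S$ is ``blamed'' on at most one set. I would set up an incidence bound showing that each point of $S$ can pierce at most one set in a well-chosen family of $\mu$-large convex sets derived from the Radon-shattered $Y$, and conclude that hitting all of them forces $|S| \geq (1-2\eps)r$. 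Making this incidence structure precise — guaranteeing the family has size at least $(1-2\eps)r$ while each net point hits at most one member — is the crux, and I expect the bound $(1-2\eps)r$ to emerge directly from the count of how many disjoint ``large'' hulls the shattered set supports.
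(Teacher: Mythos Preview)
You have the definition of Radon-shattering backwards, and this derails the whole attempt. In this paper, $C$ \emph{Radon-shatters} $Y$ means that for \emph{every} partition $Y=Y_1\sqcup Y_2$ one has $conv(Y_1)\cap conv(Y_2)=\emptyset$; the Radon number is the least $r$ for which no $r$-set is Radon-shattered. So when the Radon number exceeds $r$, the $r$-set $Y$ you pick has the property that disjoint subsets of $Y$ have \emph{disjoint} convex hulls. There is no ``Radon witness $p\in conv(Y_1)\cap conv(Y_2)$'' to exploit --- quite the opposite --- and every paragraph in your proposal that leans on such a $p$ collapses.

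Once the sign is fixed, the structure you are groping for in your last paragraph is exactly the paper's argument, but it is not a pairwise incidence bound (``each point of $S$ hits at most one member of a family''); it is a chromatic-number bound. Take $\mu$ uniform on $Y$ and consider the family $\{conv(Z): Z\subseteq Y,\ |Z|=\lceil\eps r\rceil\}$. Each such $conv(Z)$ has $\mu$-measure $\geq\eps$, and by Radon-shattering $conv(Z)\cap conv(Z')=\emptyset$ whenever $Z\cap Z'=\emptyset$. Hence any weak $\eps$-net $S$ induces a proper coloring of the Kneser graph $KG_{r,\lceil\eps r\rceil}$ (color $conv(Z)$ by a point of $S$ it contains), giving $|S|\geq\chi(KG_{r,\lceil\eps r\rceil})=r-2\lceil\eps r\rceil+2\geq(1-2\eps)r$ by Lov\'asz's theorem. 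A single net point can pierce many of these hulls, just not two whose index sets are disjoint; that is why the bound is the chromatic number rather than the size of the family or a matching number, and why your ``at most one member'' heuristic would not have closed the argument even with the correct definition.
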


This gives a non-trivial lower bound as long as $\eps < 1/2$.
Example~\ref{ex:boost} shows that this is sharp in the sense that
when $\eps>1/2$ there is no lower bound that tends to infinity with the Radon number.

The proof of Lemma~\ref{lem:LB}, 
as well as a finer distribution dependent lower bound,  
appear in Section~\ref{sec:RadonLB}.
The proof is essentially by reduction to
the chromatic number of Kneser graphs.

\subsection{The necessity of assumptions}
\label{sec:NecAssum}

\subsubsection*{Assumptions in Theorem~\ref{thm:weakeps}}
We first show that if either of the assumptions of having bounded VC dimension
or of having bounded Helly number is removed then Theorem~\ref{thm:weakeps}
ceases to hold.

To see why bounded Helly number is necessary, 
let $X$ be any finite set and set $B=\{X\}\cup\{X\setminus\{x\} : x\in X\}$.
The VC dimension of $B$ is $1$, 
but any subset of $X$
can be represented as an intersection of members of $B$. 
Thus, $B^{\cap} = 2^X$, 
%(Example 1 from the introduction),
which does not have weak $\eps$-nets of size which is
independent of $\lvert X\rvert$.

To see why bounded VC dimension is necessary,
consider the convexity space $(X,C)$ of cylinders
(Example 3 in Section~\ref{sec:Example}).
Here, $X=\{0,1\}^n$, $C$ is the family of cylinders,  
and the family of half-spaces $B$ consists of cylinders with co-dimension~1:
$B = B_0 \cup B_1$ with
\[B_t = \bigl\{\{x\in X : x_i=t\} : i\in[n]\bigr\}.\]
The Helly number of $B$ is $2$,
since an intersection of half-spaces
is empty if and only if two complementing
cylinders with co-dimension 1 participate in it.

The following claim gives a lower bound on weak $\frac{1}{4}$-nets
for $C$ over the uniform distribution $\mu$ over $X$.

\begin{claim}\label{claim:cylinders}
Every weak $(1/4)$-net for $C$ over $\mu$ has size at least $\log n$.
\end{claim}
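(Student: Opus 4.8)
The plan is to reduce the net condition to a combinatorial statement about the restriction of $S$ to pairs of coordinates, and then count. First I would observe that under the uniform distribution $\mu$ a cylinder of co-dimension $k$ has measure exactly $2^{-k}$, so $\mu(c)\ge 1/4$ holds precisely when the co-dimension of $c$ is at most $2$. Consequently any weak $(1/4)$-net $S$ must, in particular, meet every co-dimension-$2$ cylinder: for every pair of distinct coordinates $i,j\in[n]$ and every $(a,b)\in\{0,1\}^2$ there must be a point $s\in S$ with $s_i=a$ and $s_j=b$. This is the only feature of the net property I intend to use.

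Next I would introduce, for each coordinate $i\in[n]$, the \emph{profile} of that coordinate on $S$, namely the function $S\to\{0,1\}$ sending $s\mapsto s_i$ (equivalently, the vector of the $i$-th entries of the points of $S$). The key step is to argue that these $n$ profiles are pairwise distinct. Indeed, if two coordinates $i\ne j$ had the same profile, then $s_i=s_j$ for every $s\in S$, so no point of $S$ lies in the co-dimension-$2$ cylinder $\{u : u_i=0,\ u_j=1\}$; since this cylinder has measure $1/4$, that would contradict the assumption that $S$ is a weak $(1/4)$-net. Hence the map $i\mapsto(\text{profile of }i)$ is injective.

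Finally I would close by counting: each profile is an element of $\{0,1\}^{S}$, of which there are $2^{|S|}$, so injectivity forces $n\le 2^{|S|}$, i.e.\ $|S|\ge\log_2 n$, which is the claimed bound. I expect the only point requiring care, rather than any genuine obstacle, is recognizing that merely demanding the two ``crossing'' patterns $(0,1)$ and $(1,0)$ already yields distinct profiles; the remaining patterns $(0,0),(1,1)$ and the full strength-$2$ covering condition are not needed for the lower bound, so the argument is quite short.
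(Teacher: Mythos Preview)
Your proof is correct and is essentially identical to the paper's: the paper also observes that a weak $(1/4)$-net must contain, for every $i\ne j$, a point $x$ with $x_i=0$ and $x_j=1$, defines the same map $i\mapsto (x_i)_{x\in S}\in\{0,1\}^S$, argues it is injective, and concludes $2^{|S|}\ge n$. Your ``profile'' is exactly this map, and your contrapositive injectivity argument matches the paper's reasoning.
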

%We claim that 
%a weak $(1/4)$-net for $B^{\cap}$ over $\mu$ has size at least $\log n$.
\begin{proof}
$S\subseteq X$ pierces every cylinder with measure $1/4$ 
only if for every $i \neq j$ in $[n]$ there is $x\in S$ with $x_i=0$ and $x_j=1$.
Now, consider the mapping from $[n]$ to $\{0,1\}^S$, which maps $i\in[n]$
to $(x_i)_{x\in S}$. By the above, this mapping is one-to-one, and in particular
$2^{\lvert S\rvert}=\bigl\lvert \{0,1\}^S\bigr\rvert \geq n$.
\end{proof}

\subsubsection*{Assumptions in Corollary~\ref{cor:EQ}}

We now describe an example showing that
the compactness assumption in Corollary~\ref{cor:EQ} is necessary
(a related example appears in~\cite{BEHW89,wenocur1981some} in the context of strong $\eps$-nets).
Let $X=\omega_1$ be the first uncountable ordinal,
and $C$ be the family of all intervals in the well-ordering of $X$
(a set $I\subseteq X$ is an interval if whenever $a,b\in I$ and $a\leq x\leq b$ then also $x\in I$). 
The space $(X,C)$ is separable with Radon number~$3$,
but is not compact.

We claim that it does not have finite weak $\eps$-nets, even for $\eps=1$. 
Indeed, let~$\mu$ be the probability distribution 
defined over the $\sigma$-algebra generated by countable subsets of $X$
and assigns every countable subset of $X$ measure $0$.
Every interval is either countable or has a countable complement, and is therefore measurable. 

We now claim that there is no finite $S\subseteq X$ that pierces
all intervals of measure~$1$. Indeed, let $S$ be finite, and let $m$ be the maximum element in $S$. 
The interval $\{x\in X : x > m\}$ has measure $1$
but is not pierced by $S$.

%We now describe an example showing that
%the separability assumption is necessary.
%Let $X$ be the projective plane over a finite field of large prime size,
%and denote by $L$ the set of lines in $X$.
%Choose uniformly at random and independently for each $\ell \in L$, a set $b_\ell \subset X$
%that contains $\ell$ of size $|b_\ell|=|X|/2$.
%Let $B = \{b_\ell : \ell \in L\}$.
%Consider the convexity space $C = B^\cap \cup \{\emptyset,X\}$.
%
%We claim that the Radon number of $C$ is at most $4$. Indeed, let $x_1,\ldots,x_4$
%be four distinct points in $X$.
%Let $\ell_1$ be the line containing $x_1,x_2$ and $\ell_2$ be
%the line containing $x_3,x_4$.
%Thus, $\ell_1 \subseteq conv(\{x_1,x_2\})$
%and $\ell_2 \subseteq conv(\{x_3,x_4\})$.
%The claim follows since $\ell_1 \cap \ell_2 \neq \emptyset$.
%
%We also claim that (with positive probability) there is no weak $\frac{1}{2}$-net 
%for $C$ over the uniform distribution on $X$.
%Indeed, this follows from that (with positive probability)
%the VC dimension of $C$ is at least $\Omega( \log |X|)$.}

\section{Proof of upper bound}
\label{sec:construction}

Here we construct weak $\eps$-nets when the Helly number
and the VC dimension of the half-spaces are bounded (Theorem~\ref{thm:weakeps}).
The property of VC classes that we use is 
the following packing lemma due to Haussler~\cite{haussler1995sphere}.

\begin{theorem}[Haussler]
\label{thm:packing}
Let $B\subseteq 2^X$ be a class of VC dimension $v$.
For every distribution $\mu$ on $X$ and for every $\delta > 0$,
there is $A \subseteq B$ of size $\lvert A\rvert \leq (4e^2/\delta)^v$
such that for every $b \in B$ there is $a\in A$
with $\mu(a\Delta b)\leq \delta$.
\end{theorem}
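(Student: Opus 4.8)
The plan is to recast the statement as a covering bound in the pseudometric $d_\mu(a,b) = \mu(a \Delta b)$ on $B$, and to reduce covering to packing. A family $A \subseteq B$ with the required property is exactly a $\delta$-cover of $B$ under $d_\mu$, and I would produce such a cover as any \emph{maximal} $\delta$-separated subfamily $A$ (one in which every two members satisfy $d_\mu(a,a') > \delta$). Maximality means that no $b \in B$ can be adjoined while keeping all pairwise distances above $\delta$, so every $b$ lies within $d_\mu$-distance $\delta$ of some $a \in A$. Hence it suffices to upper bound the maximum possible size of a $\delta$-separated family, the packing number $M(\delta)$, by $(4e^2/\delta)^v$.

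Next I would discretize, reducing to a finite ground set carrying the uniform measure. Drawing $m$ i.i.d.\ samples from $\mu$ and passing to the empirical measure $\hat\mu$, the distances $\hat\mu(a \Delta b)$ concentrate uniformly around $\mu(a\Delta b)$: the symmetric differences $\{a \Delta b : a,b \in B\}$ again form a class of VC dimension $O(v)$, so a uniform law of large numbers applies. For $m$ large enough, a $\delta$-separated family under $\mu$ remains (say) $(\delta/2)$-separated under $\hat\mu$, while restricting to the sample cannot increase the VC dimension. Thus it suffices to bound $M(\cdot)$ in the finite uniform model, where $d_{\hat\mu}$ is the normalized Hamming distance and the problem becomes purely combinatorial: given $A \subseteq \{0,1\}^m$ of VC dimension at most $v$ with pairwise Hamming distance exceeding $\delta m$, bound $|A|$.

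For the finite bound there are two routes. The elementary one picks a random set $T$ of $t$ coordinates; any two $\delta$-separated members collide on $T$ with probability at most $(1-\delta)^t \le e^{-\delta t}$, so a union bound over the $\binom{|A|}{2}$ pairs shows that for $t \approx \delta^{-1}\ln|A|$ some $T$ separates all of $A$ into distinct patterns. Sauer--Shelah then bounds the number of patterns on $t$ coordinates by $(et/v)^v$, and solving the resulting inequality yields $|A| \le (C\delta^{-1}\log(1/\delta))^v$, which is off by a logarithmic factor from the claimed $(4e^2/\delta)^v$. To remove it I would invoke Haussler's sharper argument through the one-inclusion graph: its density lemma states that the one-inclusion graph of a VC-dimension-$v$ class has at most $v$ times as many edges as vertices, and a careful averaging translates the Hamming separation into a matching lower bound on the number of such edges, giving the tight exponential rate with no log factor.

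The main obstacle is precisely this last step: obtaining the clean $(4e^2/\delta)^v$ without the spurious logarithm. The random-projection argument is short but lossy, whereas the sharp constant is the genuine content of the lemma and rests on the one-inclusion graph density bound together with the averaging that converts pairwise Hamming separation into an edge count. The discretization step requires care but is standard, and the reduction from covering to packing is immediate.
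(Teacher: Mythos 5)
The paper does not prove this statement: it is quoted as Haussler's theorem and cited to his 1995 paper, with only a short remark sketching the weaker bound $\bigl(O(\delta^{-1}\log(1/\delta))\bigr)^v$ via random sampling, the union bound over pairs, and Sauer--Shelah--Perles. Your elementary route (random coordinate projection, collision probability $(1-\delta)^t$, Sauer--Shelah on the projected patterns) is exactly that sketch, and your preliminary reductions (maximal $\delta$-separated family gives a $\delta$-cover; discretization to the empirical measure) are standard and sound. Where your proposal stops short is precisely where the stated theorem lives: the removal of the logarithmic factor. You correctly name the one-inclusion graph density lemma (edge density at most $v$) as the key ingredient, but the ``careful averaging'' that converts pairwise Hamming separation into an edge-count lower bound is the entire content of Haussler's argument and is left as a black box; as written, what you have actually proved is the lossy bound, not the $(4e^2/\delta)^v$ in the statement. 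Since the paper likewise defers to Haussler for the sharp constant (and the lossy bound would in fact suffice for the application here, only worsening constants in Theorem~\ref{thm:weakeps}), this is a faithful match to the paper's treatment rather than a complete independent proof.
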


Haussler's stated the lemma
in a dual way; the number of disjoint balls of a given radius in a VC class
is small.
Haussler's proof is elaborate, but a weaker bound can be proved fairly easily.
Indeed, consider a finite set $A$ so that
$\mu (a \Delta a') > \delta$ for all $a \neq a'$ in $A$ .
Let $x_1,\ldots,x_m$ be $m$ independent samples from $\mu$ for \new{$m \geq 2 \log (|A|)/\delta$}.
Let $Y = \{x_1,\ldots,x_m\}$,
and let $A|_Y = \{a \cap Y : a \in A\}$.
On one hand, the Sauer-Shelah-Perles lemma implies that
$A|_Y$ is small:
$\bigl\lvert A|_Y\bigr\rvert \leq (e m / v)^v$.
On the other hand, by the union bound, 
$\bigl\lvert A|_Y\bigr\rvert=|A|$ with positive probability.
This implies that $A$ is small.

\begin{proof}[Proof of Theorem~\ref{thm:weakeps}]
We start by focusing on the set
$B_0 = \{b\in B : \mu(b) > 1-1/h\}$.
By the union bound, every $h$ members of $B_0$ intersect.
Since $B_0\subseteq B$ is compact with Helly number $h$, 
there is a single point $x_0 \in X$ that pierces all sets in~$B_0$.

Let $0 < \eps <1$.
We construct the  $\eps$-net by induction on $N(\eps)$,
which is defined to be the minimum integer $n$ such that
$\eps\bigl(1+1/(2h)\bigr)^n > 1-1/h$.

%For $1 > \eps > 0$, let $N(\eps)\in\N$ be the minimum integer $n$ such that
%$\eps\bigl(1+1/(2h)\bigr)^n > 1-1/h$. 
%The construction is by induction on $N(\eps)$.

\subsubsection*{Induction base}
If $N(\eps)=0$, define the piercing set $S = S(\mu,\eps)$ as
$$S = \{x_0\},$$
where $x_0$ is the point that pierces all half-spaces in $B_0$.
Indeed, $S$ is an $\eps$-net as every $c\in C$ with $\mu(c)\geq \eps > 1-1/h$
{is the intersection of half-spaces} from $B_0$, so $x_0$ pierces $c$ as well.

\subsubsection*{Induction step}
Let $1 >\eps > 0$ such that $N(\eps) > 0$.
We construct the piercing set $S=S(\mu,\eps)$ as follows:
Set $\delta = \eps/(2h)^2$ and pick some $A\subseteq B$ as in Theorem~\ref{thm:packing}. 
Also set $\eps'=(1+1/(2h))\eps$.
Note that $N(\eps')= N(\eps)-1$.
By induction,
for each $a \in A$ with $\mu(a) > 0$, 
pick a piercing set $S_a = S(\mu|_{a},\eps')$, where 
$\mu|_{a}$ denotes the distribution $\mu$ conditioned on $a$.
Finally, let 
$$S=\{x_0\}\cup\bigcup_{a \in A : \mu(a)>0}S_a.$$
It remains to prove that $S$ satisfies the required properties.

\

\noindent
\underline{$S$ is piercing:}
Let $c \in C$ with $\mu(c) \geq \eps$.
If $c$ is generated\footnote{\new{I.e.\ $c$ can be presented as an intersection of sets from $B_0$.}} by $B_0$ then $x_0$ pierces $c$.
Otherwise, there is some $b\in B$ with $\mu(b) \leq 1-1/h$ that contains $c$.
Pick $a\in A$ such that $\mu(a\Delta b)\leq \delta$. 
Since $\mu(b) \geq \eps$ and $\mu(a\Delta b)\leq\delta$,
$$\mu(a) \geq \mu(b) - \mu(b\setminus a) \geq 
\eps- \delta > 0,$$
which means that $\mu|_a$ is well-defined.
We claim that
$S(\mu|_{a},\eps')$ pierces $c$. To this end, it suffices to show that $\mu|_{a}(c)\geq \eps'$:
\begin{align*}
\mu|_{a}(c) &= \frac{\mu(c\cap a)}{\mu(a)}\\
                   &= \frac{\mu(c)- \mu\bigl(c\cap(b\setminus a)\bigr)}{\mu(a)}\tag{$c\subseteq b$}\\
                   &\geq \frac{\mu(c)- \mu(b\setminus a)}{\mu(a)}\\
                   &\geq \frac{\eps- \delta}{1-1/h + \delta}
                   \tag{$\mu(c) \geq \eps, \mu(b) \leq 1-1/h,\mu(a\Delta b)\leq\delta$}\\
                 &\geq \frac{\eps(1-1/(2h)^2)}{1-1/(2h)} \tag{$\delta=\eps/(2h)^2$}\\
                   &=\eps'.
\end{align*}

\

\noindent
\underline{\em $S$ is small:}
Let $\beta(n)$ denote the maximum possible size of $S$ for $\eps$ with $N(\eps)=n$.
The argument above yields
\[ \beta(n) \leq 1 + (4e^2/\delta)^v \cdot\beta(n-1) =   1 + (16 e^2  h^2/\eps)^v\cdot\beta(n-1).\]
Since $\beta(0) = 1$, we get

$$\beta(n) \leq (120 h^2 / \eps)^{vn}.$$
Finally, since $N(\eps) \leq 4 h\ln(1/\eps))$, we get the bound
$$|S(\mu,\eps)| \leq (120 h^2/\eps)^{4hv\ln(1/\eps)}.$$

%\begin{align*}
%q &= (16 e^2  h^2/\eps)^v \\
%   &\implies \\
%\beta(n) &\leq 1 + q + \ldots + q^n\\
%	     &= \frac{q^{n+1}-1}{q-1}\\
%	     &\leq \frac{q^{n+1}}{q-1}\\
%	     &= q^n \cdot \Bigl(\frac{q}{q-1}\Bigr)\\
%	     &= (16 e^2  h^2/\eps)^{vn} \Bigl(\frac{(16 e^2  h^2/\eps)^{v}}{(16 e^2  h^2/\eps)^{v}-1}\Bigr)\\
%	     &\leq (120 h^2/\eps)^{vn}\\
%	     &\iff  \\
%\frac{(16 e^2  h^2/\eps)^{v}}{(16 e^2  h^2/\eps)^{v}-1} &\leq \frac{(120)^{vn}}{(16 e^2 )^{vn}}\\
%\frac{(16 e^2  h^2/\eps)^{v}}{(16 e^2  h^2/\eps)^{v}-1}&\leq 1.009\\
%1.01 \leq\frac{(120)^{vn}}{(16 e^2 )^{vn}}
%\end{align*}

\end{proof}

\section{Proof of lower bound}
\label{sec:RadonLB}

The proof of Lemma~\ref{lem:LB} is based on the following distribution-dependent
lower bound on the size of weak nets:

\begin{lemma}\label{lem:LBDD}
Let $C$ be a family of subsets over a domain $X$ and let $\mu$ be a distribution on $X$.
For $\eps>0$ define a graph $G=G(\mu,\eps)$ whose vertices are the sets $c\in C$
such that $\mu(c)\geq \eps$, and two sets are connected by an edge if and only if they
are disjoint.
Then, every weak $\eps$-net for $C$ over $\mu$ has size at least the chromatic
number of $G$, which is denoted by $\chi(G)$.
\end{lemma}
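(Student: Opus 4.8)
The plan is to establish a lower bound on the size of any weak $\eps$-net by connecting the piercing requirement to a proper coloring of the graph $G=G(\mu,\eps)$. The key observation is that each point in a weak $\eps$-net induces a natural coloring of the vertices of $G$, and the constraint that the net pierces every set forces this coloring to be proper. More precisely, let $S$ be a weak $\eps$-net for $C$ over $\mu$. Since $S$ pierces every $c\in C$ with $\mu(c)\geq\eps$, every vertex of $G$ contains at least one point of $S$. I would use this to define, for each vertex $c$ of $G$, a color chosen from the points of $S$ that it contains.

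The main step is to verify that such a coloring is proper. First I would fix an arbitrary function assigning to each vertex $c$ of $G$ some point $f(c)\in S\cap c$ (which is nonempty precisely because $S$ pierces $c$). This gives a map $f$ from $V(G)$ to $S$, and I claim it is a proper coloring. To see this, suppose $c$ and $c'$ are adjacent in $G$, meaning $c\cap c'=\emptyset$. If we had $f(c)=f(c')$, then this common point would lie in both $c$ and $c'$, contradicting disjointness. Hence adjacent vertices receive distinct colors, so $f$ is a proper coloring of $G$ using at most $|S|$ colors.

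Since $G$ admits a proper coloring with $|S|$ colors, its chromatic number satisfies $\chi(G)\leq|S|$, which immediately yields $|S|\geq\chi(G)$ as desired. Because $S$ was an arbitrary weak $\eps$-net, every such net has size at least $\chi(G)$.

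I do not expect any serious obstacle here; the argument is essentially a direct translation between the combinatorial notion of piercing and graph coloring. The only point requiring minor care is ensuring the map $f$ is well-defined, which follows immediately from the net property (each relevant $c$ is pierced, so $S\cap c\neq\emptyset$) together with the axiom of choice for the selection $f(c)\in S\cap c$; since $S$ is the object whose size we are bounding and need not be finite a priori, this selection is the only nontrivial ingredient, and it is routine. The heart of the matter is simply recognizing that disjointness of two sets forbids a shared piercing point, which is exactly the edge condition in $G$.
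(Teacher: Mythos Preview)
Your proof is correct and follows exactly the same approach as the paper: pick for each vertex $c$ a point of $S\cap c$, and observe that disjoint sets cannot share a piercing point, yielding a proper coloring of $G$ with $\lvert S\rvert$ colors. The paper's argument is identical, just stated more tersely.
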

\begin{proof}
Let $S$ be a set that pierces all $c\in C$ with $\mu(c)\geq\eps$.
Define a coloring of $G$ by assigning to every vertex $c$ an element $x\in S\cap c$.
This is a proper coloring of $G$, 
since if $\{c,c'\}$ is an edge of $G$
then $c$ and $c'$ are disjoint and therefore can not be pierced by the same element of $S$.
\end{proof}

Lemma~\ref{lem:LBDD} is tight whenever the class $C$ has Helly number $2$
(like in examples~3 and~4 in Section~\ref{sec:Example}).
Indeed, consider an optimal coloring of $G(\mu,\eps)$.
Every color class is an independent set in $G$ 
(which means that every two sets in it have a non-empty intersection). So,
when the Helly number is $2$, each color class
can be pierced by a single element, and we get
a piercing set of size $\chi(G)$.

The proof Lemma~\ref{lem:LB} thus reduces to a lower bound on the chromatic
number of the relevant graph, which in our case contains a copy of the Kneser graph.
The {\em Kneser graph} $KG_{n,k}$ is the graph whose vertices correspond to the k-element subsets of a set of n elements, and where two vertices are adjacent if and only if the two corresponding sets are disjoint.

Lovasz~\cite{lovasz1978kneser} proved Kneser's conjecture on the chromatic
number of this graph (this proof is considered seminal 
in the topological method in combinatorics):

\begin{theorem}[Lovasz]
\label{thm:Lovasz}
The chromatic number of $KG_{n,k}$ is $n-2k+2$.
\end{theorem}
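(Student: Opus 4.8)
The statement splits into an upper bound $\chi(KG_{n,k}) \le n-2k+2$ and a matching lower bound $\chi(KG_{n,k}) \ge n-2k+2$, and I would handle them separately. The upper bound is elementary, so I would dispose of it first by exhibiting an explicit proper coloring. Taking the ground set to be $[n]$, color a $k$-subset $A$ by $\min(A)$ whenever $\min(A) \le n-2k+1$, and by a single extra color $n-2k+2$ whenever $A \subseteq \{n-2k+2, \dots, n\}$. Two sets sharing a color $i \le n-2k+1$ both contain the element $i$, hence intersect; two sets sharing the last color both lie inside a fixed set of size $2k-1$ and so must intersect as well. This is a proper coloring using exactly $n-2k+2$ colors, giving the upper bound.

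The real content is the lower bound, and here I would follow Barany's topological argument. Fix a putative proper coloring with $m$ colors and realize the ground set $[n]$ as a carefully chosen point configuration on a sphere. Concretely, Gale's lemma provides $n$ points on the sphere $S^{n-2k}$ such that every open hemisphere contains at least $k$ of them. For each color $i$, let $A_i$ be the set of centers $x \in S^{n-2k}$ whose open hemisphere $H(x) = \{y : \langle x,y\rangle > 0\}$ contains a monochromatic $k$-subset of color $i$. Each $A_i$ is open, and since every open hemisphere contains at least $k$ points and hence a $k$-subset (which carries some color), the sets $A_1, \dots, A_m$ cover $S^{n-2k}$.

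The plan is then to invoke the Lusternik--Schnirelmann--Borsuk theorem, a standard consequence of the Borsuk--Ulam theorem: if $S^d$ is covered by $d+1$ open sets, one of them contains a pair of antipodal points. If the coloring used only $m \le n-2k+1 = (n-2k)+1$ colors, then $A_1,\dots,A_m$ is such a cover, so some $A_i$ contains antipodal points $x$ and $-x$. But then both $H(x)$ and $H(-x)$ contain a $k$-subset of color $i$, and since these two open hemispheres are disjoint, the two $k$-subsets are disjoint as well. That is an edge of $KG_{n,k}$ whose endpoints receive the same color, contradicting properness. Hence $m \ge n-2k+2$, which is the lower bound.

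The hard part will be the lower bound, which is genuinely non-combinatorial: it rests on the Borsuk--Ulam theorem (through its Lusternik--Schnirelmann--Borsuk formulation) and on Gale's lemma. I would treat Borsuk--Ulam as a black box, but Gale's lemma needs its own construction --- placing the $n$ points along a suitable moment-type curve and verifying the hemisphere condition via a sign-change argument. The most delicate point is the dimension bookkeeping, namely that $n$ points sit on $S^{n-2k}$ and that a cover of size $(n-2k)+1$ is exactly what forces an antipodal pair; an off-by-one there would shift the final bound.
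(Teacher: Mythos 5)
Your proof is correct, and it is the standard Barany proof of the Kneser conjecture: the greedy coloring by $\min(A)$ (with one extra color for the $k$-sets inside a fixed $(2k-1)$-element tail) gives the upper bound $n-2k+2$, and Gale's lemma together with the Lusternik--Schnirelmann--Borsuk consequence of Borsuk--Ulam gives the matching lower bound; your dimension bookkeeping ($n=2k+d$ points on $S^d$ with $d=n-2k$, so that $m\le d+1$ open sets covering $S^d$ force an antipodal pair in one of them) is right. The paper, however, takes a genuinely different route: it does not prove Theorem~\ref{thm:Lovasz} at all, but cites Lovasz for the exact value and explicitly remarks that the full strength is unnecessary for its purposes. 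What it actually proves, in Appendix~\ref{sec:Alon} following an argument of Alon (found independently by Szemeredi), is only the much weaker bound $\chi(KG_{n,n/4})>n/10$, by a purely combinatorial argument: each color class is an intersecting family, its upward closure $F_i$ is still intersecting, Kleitman's lemma bounds $\bigl|\bigcup_i F_i\bigr|$ by $2^n-2^{n-s}$ for $s$ intersecting families, and the complement of the union has size at most $\sum_{j\le n/4}\binom{n}{j}\le 2^{nH(1/4)}$, forcing $s>n(1-H(1/4))\ge n/10$. Your approach buys the tight bound $n-2k+2$ for every $k$ at the cost of topological machinery (Borsuk--Ulam and the moment-curve construction in Gale's lemma); the paper's approach buys a short, elementary, self-contained proof of a quantitatively weaker statement that is all that Lemma~\ref{lem:LB} needs.
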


We actually do not need the full strength of Lovasz's result.
A lower bound of the form $\chi(KG_{n,n/4}) \geq n/10$
suffices for deducing the equivalence between the existence
of weak nets and finite Radon number
(in fact, even much weaker bounds suffice).
Noga Alon informed us that for this range of the parameters
there is a short and elementary proof~\cite{AlonPrivate}.
Since this argument does not appear in the literature 
and may be useful elsewhere, we include Alon's proof in Section~\ref{sec:Alon}.

\begin{proof}[Proof of Lemma~\ref{lem:LB}]
Since the Radon number is greater than $r$,
it follows that there is a set $Y\subseteq X$
of size $r$ that is Radon-shattered by $C$.
Pick $\mu$ to be the uniform distribution over $Y$.
By Lemma~\ref{lem:LBDD} it suffices to show that
$\chi\bigl(G(\mu,\eps)\bigr)\geq (1-{2\eps})r$.
This follows by noticing that since $Y$ is Radon-shattered, it follows that
the subgraph of $G(\mu,\eps)$ induced by the vertices
$conv(Z)$, for $Z\subseteq Y$ is of size $\lceil {\eps r}\rceil$,
is isomorphic to $KG_{r,\lceil {\eps r}\rceil}$.

%\[ r - 2(\lceil \eps r\rceil) + 2 \geq r - 2( \eps r +1) +2 = r - 2\eps r  = (1-2\eps)r \]

%which by Theorem~\ref{thm:Lovasz} yields the desired bound.
%Indeed, the subgraph induced by the sets
%$conv(Z)$, where $Z\subseteq Y$ of size $\lceil r/4\rceil$
%is isomorphic to $KG_{r,\lceil r/4\rceil}$.
\end{proof}

\section{Proof of equivalences}
\label{sec:cors}

\subsection{The existence of weak nets}
\label{sec:corEQ}

\begin{proof}[Proof of Corollary~\ref{cor:EQ}]
Let $(X,C)$ be a compact separable convexity space.

\paragraph{$1 \Rightarrow 3.$}
By Lemma~\ref{lem:Radon}, the family $B$ of half-spaces of $C$
has finite VC dimension and Helly number.
Theorem~\ref{thm:weakeps} now implies that $B^{\cap}=C$
has finite weak $\eps$-nets for every $\eps>0$.

\paragraph{$3\Rightarrow 2.$} Obvious.

\paragraph{$2 \Rightarrow 1.$}
Assume that $C$ has weak $\eps$-nets of size $\beta = \beta(\eps)<\infty$ for some $\eps < 1/2$.
By Lemma~\ref{lem:LB}, the Radon number of $(X,C)$
is at most $\frac{\beta}{1-2\eps}$.
\end{proof}

\subsection{The threshold $1/2$ is sharp}
\label{sec:boost}

Here we describe Example~\ref{ex:boost}.
Let $X=\{0,1\}^{\N}$ be the Cantor space, 
and let $C$ be the family of all cylinders;
recall that $c\subseteq X$
is a cylinder if there exist $Y\subseteq \N$
and $u\in\{0,1\}^Y$ such that $c=\{v\in X : v|_Y = u\}$.
The space $(X,C)$ is a separable convexity space.
It is also compact
(this follows e.g.\ from Tychonoff's theorem).

We claim that $(X,C)$ has $\eps$-nets of size $1$ for every $\eps > 1/2$.
This follows since for every distribution $\mu$ the family $\{c\in C: \mu(c) > 1/2\}$ is intersecting\footnote{A family of sets is {\em intersecting} if every two members of it intersect.},
and since $C$ has Helly number $2$.
Hence, $\cap \{c\in C: \mu(c) > 1/2\} \neq \emptyset$ for all $\mu$, as claimed.

It remains to show that $(X,C)$ has no finite weak $\eps$-nets
for $\eps < 1/2$. By Corollary~\ref{cor:EQ},
it suffices to consider the case $\eps=1/4$.
Let $\mu$ be the Bernoulli measure on the Cantor space;
namely the infinite product of uniform measure on $\{0,1\}$.
Now, $S\subseteq X$ is a weak $\eps$-net over $\mu$ if and only if
it intersects every cylinder with co-dimension $2$. In particular
for every $i \neq j$ in $\N$ there must be $x\in S$ with $x_i=0$ and $x_j=1$.
By the proof of Claim~\ref{claim:cylinders} it follows that such an $S$ must be infinite.

\section{An extension}
\label{sec:ext}

Consider the space of bounded closed
convex sets in $\R^d$.
The corresponding convexity space in not separable nor compact.
Nevertheless, our results extend to this space as well.

The following variants of separability and compactness suffice.
A convexity space is called {\em locally-separable} if
there is a set $B \subseteq C$ so that
$C = B^\cap$ and for every finite $Y \subseteq X$ and for every $b \in B$
there is $\bar b \in B$ so that $b \cap Y$ and $\bar b \cap Y$
form a partition of $Y$.
Every separable space is locally-separable,
but there are convexity spaces 
which are locally separable but not separable
(like the space of bounded convex sets in $\R^d$).
A convex set $c\in C$ is called {\em compact}
if the restricted convexity space $(c,\{c'\cap c : c'\in C\})$ is compact.
The {\em Radon number} of $c$ is the Radon number of 
the space $(c,\{c'\cap c : c'\in C\})$.

\begin{theorem}\label{thm:ext}
Let $(X,C)$ be a locally separable convexity space such that
there exists a chain $c_1\subseteq c_2\subseteq\ldots$
of compact convex sets each of which has Radon number at most $r$ 
such that $\bigcup_i c_i = X$.
Then $(X,C)$ has finite weak $\eps$-nets for every $\eps>0$.
%for every $\eps > 0$ there is $\beta=\beta(r,\eps)$ so that for every 
%distribution $\mu$ on $X$ there is a weak $\eps$-net for $C$ over $\mu$
%of size at most $\beta$.
\end{theorem}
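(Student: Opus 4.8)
The plan is to reduce \cref{thm:ext} to the already-established \cref{thm:weakeps} by exhausting $X$ through the compact chain and passing to a limit. First I would fix $\eps>0$ and let $B\subseteq C$ be the generating family witnessing local separability, so that $C=B^\cap$. For each fixed index $i$, I would restrict attention to the compact convex set $c_i$, forming the induced convexity space $(c_i, C|_{c_i})$ where $C|_{c_i}=\{c'\cap c_i : c'\in C\}$. The crucial observations are: this restricted space is compact (by hypothesis on $c_i$) and its Radon number is at most $r$; moreover local separability descends to the restriction, since for every finite $Y\subseteq c_i$ and every $b\in B$ the complementary $\bar b\in B$ still splits $Y$, so the traces $B|_{c_i}=\{b\cap c_i : b\in B\}$ generate $C|_{c_i}$ and have the complementation property on finite subsets. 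Local separability is exactly the property needed to run the proof of \cref{lem:Radon}: the VC dimension and Helly number of $B|_{c_i}$ are both bounded by $r$, the bound on VC dimension being immediate from Radon-shattering and the bound on the Helly number following from Levi's argument, which only uses finite subfamilies and hence is unaffected by replacing genuine separability with its finite-set (local) version.

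Next I would apply \cref{thm:weakeps} to the family $B|_{c_i}$ inside the domain $c_i$: since it is compact with VC dimension and Helly number at most $r$, the generated family $(B|_{c_i})^\cap = C|_{c_i}$ admits weak $\eps$-nets of size at most $\beta \eqdef (120 r^2/\eps)^{4r^2\ln(1/\eps)}$, a bound that is uniform in $i$ because it depends only on $r$ and $\eps$. Concretely, given any distribution $\mu$ on $X$, I would like to produce a single finite net of size bounded by some function of $\beta$. The natural move is to consider, for each $i$, the conditional distribution $\mu_i = \mu|_{c_i}$ (defined once $\mu(c_i)>0$, which holds for all large $i$ since $\mu(c_i)\to\mu(X)=1$ by the chain exhausting $X$), and obtain from \cref{thm:weakeps} a weak $\eps$-net $S_i\subseteq c_i$ for $C|_{c_i}$ over $\mu_i$ with $|S_i|\le\beta$.

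The main obstacle is the limiting argument: the sets $S_i$ live in larger and larger domains and need not stabilize, so I cannot simply take a union (that could be infinite) nor a single $S_i$ (a convex set of large $\mu$-measure might straddle $c_i$ and $X\setminus c_i$). The clean way around this is a compactness/pigeonhole argument on the finite nets. Since every $S_i$ has size at most $\beta$, I would encode each net by the combinatorial pattern it induces, and extract an infinite subsequence along which these patterns agree; using compactness of the $c_i$ (and Tychonoff-style compactness of the ambient structure) I would then argue that the ``limit'' configuration yields a single finite set $S$ of size at most $\beta$ that pierces every $c\in C$ with $\mu(c)\ge\eps$. The key point making this work is that any such $c$ already satisfies $\mu(c\cap c_i)\ge\eps$ for all sufficiently large $i$ (because $\mu(c_i)\to 1$), so $c\cap c_i$ is a genuine $\eps$-heavy set in the restricted space and is pierced by $S_i$; pushing this through the limit shows $S$ pierces $c$ itself. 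I expect the delicate bookkeeping to be in verifying that the limit point (or stabilized net) actually lies in $X$ and pierces the original, unrestricted convex sets rather than only their traces on the $c_i$ — this is where local separability and the compactness of each $c_i$ must be combined carefully.
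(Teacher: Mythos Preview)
Your setup is sound up to the point where you obtain, for each $i$, a weak $\eps$-net $S_i$ of uniformly bounded size for the restricted space $(c_i,C|_{c_i})$. The problem is the limiting step. You propose to ``encode each net by the combinatorial pattern it induces'' and extract a limit via compactness, but there is no ambient compactness on $X$ to support this: the hypothesis gives compactness only on each $c_i$ separately, and the nets $S_i$ do not lie in any common compact set. There is no topology on $X$ in which to speak of convergent subsequences of points, and no finite combinatorial structure (the family $C$ may be uncountable) against which to stabilize patterns. As you yourself flag, verifying that a ``limit point'' exists and lands in $X$ is exactly where the argument is incomplete, and I do not see how to repair it with the tools available.

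More importantly, the limit is unnecessary, and your dismissal of the single-$S_i$ approach is the real oversight. You write that ``a convex set of large $\mu$-measure might straddle $c_i$ and $X\setminus c_i$,'' but this is harmless once $c_i$ is large enough: choose $i$ with $\mu(c_i)\ge 1-\eps/2$ (possible since $\mu(c_i)\to 1$), and build a weak $(\eps/2)$-net $S$ for $(c_i,C|_{c_i})$ over $\mu|_{c_i}$. For any $c\in C$ with $\mu(c)\ge\eps$ one has $\mu(c\cap c_i)\ge \mu(c)-\mu(X\setminus c_i)\ge \eps/2$, hence $\mu|_{c_i}(c\cap c_i)\ge \eps/2$, so $S$ pierces $c\cap c_i\subseteq c$. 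This is exactly the paper's argument: apply \cref{thm:main} (whose proof goes through verbatim for locally-separable compact spaces) to a single $c_i$ in the chain with $\mu(c_i)\ge 1-\eps/2$. No limit, no subsequence extraction, and the resulting bound is $(120r^2\cdot 2/\eps)^{4r^2\ln(2/\eps)}$.
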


%The space of convex sets in $\R^d$
%satisfies the above condition with Radon number $d+2$.

Theorem~\ref{thm:main} and its proof apply
for locally-separable and compact convexity spaces. 
Theorem~\ref{thm:ext} therefore follows 
by applying it to a $c_i$ in the chain
such that $\mu(c_i)\geq 1-\eps/2$.

\section{Future research}
\label{sec:discussion}

We showed that for compact separable convexity spaces,
the existence of weak nets is equivalent to having a finite
Radon number. We now suggest several directions for future research.

One interesting direction is to find a characterization that is valid
even more generally (i.e.\ for families that are not necessarily separable convexity spaces).
It is worth noting in this context that the definition of 
the Radon number can be extended to arbitrary families.
One may extend the definition of Radon-shattering as follows:
A family $C\subseteq 2^X$ \emph{Radon-shatters} 
the set $Y\subseteq X$ if for every partition of $Y$ 
into two parts $Y_1,Y_2$ there are two disjoint sets 
$c_1,c_2\in C$ such that $c_1 \cap Y = Y_1$ and $c_2 \cap Y = Y_2$.
%The {\em Radon number} of $C$  
%is the smallest $r$ such that $C$ does not Radon-shatter
%any set of size $r$.

This extension of the Radon number does not characterize the existence
of weak nets for arbitrary families.
For instance, let $C=\{Y\subseteq [n] : \lvert Y\rvert > n/2\}$.
The Radon number is $2$ since every two sets in $C$ intersect,
but every weak $\frac{1}{2}$-net over the uniform distribution
has size at least $n/2-1$.
However, this family $C$ is not convex (closed under intersection),
which is a crucial property in our work.
It may also be interesting to fully understand the role
of convexity in the context of weak nets.
%We do not know whether for intersection-closed families
%there is  an upper bound on the size of weak $\eps$-nets in terms
%of the Radon number and $\eps$.
%The lower bound in Theorem~\ref{thm:main} does apply
%for any intersection-closed families (with the same proof).

Alon et al.~\cite{alon2002transversal}
studied weak nets and the $(p,q)$-property in an abstract
setting and described connections to fractional Helly properties.
It may further be interesting to investigate which other combinatorial
properties of convex sets apply in more general settings. 

An additional question that comes to mind
is the dependence of the size of weak $\eps$-nets
on $\eps$.
In this direction, Bukh, Matousek and Nivasch
proved an $\Omega \left( \frac{1}{\eps} \log^{d-1} \frac{1}{\eps} \right)$
lower bound on the size of weak $\eps$-nets for convex sets in $\R^d$~\cite{bukh2011lower},
and recently Rubin~\cite{Rubin18improved} proved an upper bound of roughly $O(1/\eps^{3/2})$ in $\mathbb{R}^2$
(which improves upon the general bound in $\mathbb{R}^d$ of roughly $O(1/\eps^d)$ by~\cite{chazelle1993improved}).
Proving tight bounds on the size of weak $\eps$-nets
is a central open problem in this area.
The general framework developed here may be useful
in proving stronger lower bounds.

%It may be interesting to further investigate connections
%between our work and these, as well other similar notions.
%
%\begin{enumerate}
%\item 
%
%In particular, weak $\eps$-nets are used as to explain other
%important combinatorial properties of convex sets such as
%the $(p,q)$-Theorem by Alon and Kleitman. It is interesting whether
%these properties apply more generally.
%\item
%
%
%Alon et al.~\cite{alon2002transversal}
%studied weak nets and the $(p,q)$-property in an abstract
%setting and described connections to fractional Helly properties.
%It may be interesting to further investigate connections
%between our work and these, as well other similar notions.
%
%
%
%\end{enumerate}

\subsection*{Acknowledgements}

We thank Noga Alon, Yuval Dagan, and Gil Kalai for helpful conversations.
We also thank the anonymous reviewers assigned by SoCG '19 for their helpful comments which
improved the presentation of this work.
\bibliographystyle{abbrv}
\bibliography{ref}

\appendix

%
%\section{Separable spaces}
%\label{ap:sep}
%
%\begin{lemma}
%The following statements are equivalent for a convexity space $(X,C)$:
%\begin{enumerate}
%\item $(X,C)$ is separable.
%\item Every $c\in C$  is the intersection of all half-spaces
%containing $c$.
%\end{enumerate}
%\end{lemma}
%\begin{proof}
%The implication $2 \implies 1$ is trivial.
%To prove $1 \implies 2$,
%we assume $1$ and
%need to show that each $c$ is equal 
%to the intersection of all half-spaces containing it.
%Clearly $c$ is a subset of this intersection.
%In the other direction, if $x\notin c$, then by separability
%there is a half-space $h$ that separates $\{x\}$ and $c$.
%\end{proof}

\section{Radon, Helly and VC}
\label{ap:Radon}
Here we prove that the Radon number bounds from above both the Helly number
and the VC dimension (Lemma~\ref{lem:Radon}).
The proof follows from the following two claims.
Levi~\cite{levi1951helly} proved that
%\footnote{Compactness is necessary, even for convex sets on the real line,
%where the Radon number is $3$: the family $\bigl\{[1-1/n,1) : n \in \N \}$
%has an empty intersection but every finite subfamily of it has a non empty intersection.}

\begin{claim}
Let $C$ be a convexity space.
If the Radon number of $C$ is $r$ then
its Helly number is smaller than $r$.
\end{claim}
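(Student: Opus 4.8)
The plan is to set $h=r-1$ and prove the usual common-point form of the Helly property: for every finite family of convex sets in which every $h$ of the members have a common point, the whole family has a common point. By contraposition this is exactly the statement that the Helly number (as defined above) is at most $h=r-1$, so it establishes the claim. I would prove the statement $P(n)$, ``any $n$ convex sets such that every $h$ of them share a point have a common point,'' by induction on $n\ge h$; families of size $\le h$ are their own witnesses and hence trivial, and the only substantive case is the base $n=h+1=r$, where the Radon property enters.

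For the key case $n=r$, let $c_1,\dots,c_r\in C$ be convex sets such that every $r-1$ of them meet. For each $i$ choose $p_i\in\bigcap_{j\ne i}c_j$, which exists because $\{c_j:j\ne i\}$ has exactly $r-1=h$ members. If two of the chosen points coincide, say $p_a=p_b$ with $a\ne b$, then this point lies in $\bigcap_{j\ne a}c_j$ and also in $c_a$ (since $a$ is among the indices $\ne b$), hence in $\bigcap_j c_j$, and we are done. Otherwise $\{p_1,\dots,p_r\}$ is a set of size exactly $r$; as the Radon number is $r$, this set is not Radon-shattered, so there is a partition $\{1,\dots,r\}=I_1\sqcup I_2$ with $conv(\{p_i:i\in I_1\})\cap conv(\{p_i:i\in I_2\})\ne\emptyset$. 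Fix a point $q$ in this intersection. For each $i\in I_1$ and each $j\in I_2$ we have $i\ne j$, so $p_i\in c_j$; thus $\{p_i:i\in I_1\}\subseteq c_j$, and since $c_j$ is convex, $conv(\{p_i:i\in I_1\})\subseteq c_j$, giving $q\in c_j$. Symmetrically $q\in c_j$ for every $j\in I_1$. Hence $q\in\bigcap_{j=1}^{r}c_j$, which proves $P(r)$.

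For the inductive step $n>r$, I would merge two sets: given $c_1,\dots,c_n$ with every $h$ meeting, form $c'=c_{n-1}\cap c_n\in C$ (using closure under intersection) and pass to the family $c_1,\dots,c_{n-2},c'$ of $n-1$ sets. To invoke $P(n-1)$ I must check that every $h$ of these new sets meet. A selection of $h$ of them that omits $c'$ is a selection of $h$ original sets and meets by hypothesis; a selection consisting of $c'$ together with $c_{i_1},\dots,c_{i_{h-1}}$ corresponds to the $r=h+1$ original sets $c_{i_1},\dots,c_{i_{h-1}},c_{n-1},c_n$, every $h$ of which meet, so by the already-proved case $P(r)$ they share a point, and that point lies in $c_{i_1}\cap\cdots\cap c_{i_{h-1}}\cap c'$. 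Thus the $(n-1)$-set family satisfies the hypothesis, $P(n-1)$ produces a common point, and that point lies in $\bigcap_{i=1}^{n}c_i$.

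The step I expect to be the crux is the base case $n=r$: one must apply the Radon property to a set of size \emph{exactly} $r$ (hence the preliminary disposal of coincident $p_i$), and then use only the disjointness of $I_1,I_2$ together with closure of convex sets under convex hull to certify that the Radon witness $q$ lies in every $c_j$. The remaining ingredients — the reformulation of the Helly number and the merging induction — are routine bookkeeping, and the edge cases ($r\ge2$ always, and $h-1\ge0$) cause no trouble.
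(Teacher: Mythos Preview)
Your proof is correct, and its heart---choosing $p_i\in\bigcap_{j\ne i}c_j$ and applying the Radon property to the resulting $r$ points to produce a point in every $c_j$---is exactly the argument the paper gives. The paper packages it more compactly by passing at once to a \emph{minimal} subfamily $K\subseteq C'$ with $\bigcap K=\emptyset$, which automatically places one in the situation of your base case and makes the explicit induction and the merging step $c'=c_{n-1}\cap c_n$ unnecessary.
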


\begin{proof}[Proof (for completeness)]
Let $C' \subseteq C$ be a finite family so that $\bigcap_{c \in C'} c = \emptyset$.
Let $K\subseteq C'$ be a minimal subfamily so that $\bigcap_{c\in K} c = \emptyset$.
Assume towards a contradiction that $\lvert K \rvert \geq r$.
Minimality implies that for each $k\in K$ we have 
$C_k : = \bigcap_{c\in K \setminus\{k\}} c \neq \emptyset$.
Let $x_k \in C_k$.
The $x_k$'s must be distinct (otherwise $\bigcap_{c\in K} c \neq \emptyset$).
Thus, there is a partition of $\{x_k : k\in K\}$ to two parts $Y_1,Y_2$
such that $conv(Y_1) \cap conv(Y_2) \neq \emptyset$.
But $$conv(Y_1) \subseteq \bigcap_{k \in K : x_k\in Y_2} k \quad \text{and} \quad
conv(Y_2) \subseteq \bigcap_{k \in K :x_k \in Y_1} k,$$ by construction.
This is a contradiction, so $|K|<r$.
\end{proof}

We observe that 

\begin{claim}
Let $C$ be a convexity space and $B$ be its half-spaces. 
If the Radon number of $C$ is $r$ then
the VC dimension of $B$ is smaller than $r$.
\end{claim}

\begin{proof}
Let $Y \subseteq X$ be of size $r$.
The set $Y$ can thus be partitioned to $Y_1,Y_2$ so that
$conv(Y_1) \cap conv(Y_2) \neq \emptyset$.
Assume, towards a contradiction, that there is $b \in B$ so that
$b \cap Y = Y_1$.
Since $B$ consists of half-spaces\footnote{Here we use a more general definition of half-spaces,  as in the definition of locally-separable in Section~\ref{sec:ext}.},
there is $\bar b \in B \subseteq C$ so that $Y_2 = \bar b \cap Y$.
This implies that $conv(Y_1) \cap conv(Y_2) = \emptyset$,
which is a contradiction. Thus, for all $b \in B$ we have $b \cap Y \neq Y_1$
which means that the VC dimension is less than $|Y|=r$.
\end{proof}

\section{The chromatic number of the Kneser graph}
\label{sec:Alon}

Here we prove a lower bound on the chromatic number of the
Kneser graph (which is weaker than Lovasz's). 
We follow an argument of Alon~\cite{AlonPrivate}, who informed us that a similar argument
was independently found by Szemeredi.
We focus on the following case, but the argument applies
more generally.

\begin{theorem}
\label{thm:KGnn4}
For $n$ be divisible by $4$ we have
$\chi(KG_{n,n/4}) > n/10$.
\end{theorem}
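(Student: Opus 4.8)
The plan is to prove the lower bound $\chi(KG_{n,n/4}) > n/10$ by a counting argument that compares the total number of vertices with the maximum number of vertices that any single color class can contain. Recall that a color class in a proper coloring of $KG_{n,n/4}$ is an independent set, i.e.\ an \emph{intersecting family} of $(n/4)$-subsets of $[n]$: every two of its members share at least one element. So if I can show that any intersecting family of $(n/4)$-subsets has size at most some bound $M$, then since the total number of vertices is $\binom{n}{n/4}$, every proper coloring needs at least $\binom{n}{n/4}/M$ colors, and it will remain to check that this ratio exceeds $n/10$.

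First I would bound the size of an intersecting family $\mathcal{F}$ of $k$-subsets of $[n]$ with $k = n/4$. The key trick (the ``cyclic permutation'' or Katona-style averaging argument) is to consider the $n$ cyclic intervals of length $k$ around a uniformly random cyclic ordering of $[n]$: among these $n$ arcs, an intersecting family can contain at most $k$ of them, because any two arcs of length $k < n/2$ on a cycle of length $n$ that are pairwise intersecting must all pass through a common small window, and a short computation bounds their number by $k$. Averaging over all cyclic orderings then yields the Erd\H{o}s--Ko--Rado-type inequality $|\mathcal{F}| \le \frac{k}{n}\binom{n}{k}$. With $k=n/4$ this gives $|\mathcal{F}| \le \frac14 \binom{n}{n/4}$, so each color class covers at most a quarter of the vertices. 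This already forces at least $4$ colors, which is far too weak, so the naive EKR bound alone does not suffice and I expect this to be the main obstacle.

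To get the linear-in-$n$ bound I would instead bound intersecting families more cleverly, exploiting that $k = n/4$ is well below $n/2$, so that the EKR bound is very far from tight and intersecting families are in fact exponentially smaller than $\binom{n}{k}$. Concretely, I would use a concentration / deviation estimate: a uniformly random $(n/4)$-subset $A$ has the property that for a \emph{fixed} set $B$ of size $n/4$, the probability that $A \cap B = \emptyset$ is substantial (roughly a constant, since we are choosing $n/4$ elements from $[n]$ and asking them to avoid a set of relative size $1/4$), and more strongly, one can show that an intersecting family must be confined to an exponentially small fraction of all $k$-sets. The cleanest route is a probabilistic one following Alon: fix the family $\mathcal{F}$ (a single color class), pick a uniformly random $(n/4)$-subset $A$, and estimate $\Pr[A \text{ is disjoint from some fixed } F \in \mathcal{F}]$; because $\mathcal{F}$ is intersecting, $A \in \mathcal{F}$ forces $A$ to meet every member, and a direct estimate of the measure of the set of $(n/4)$-subsets meeting a fixed one shows this measure is at most $\alpha^n$ for some constant $\alpha < 1$ (here the gap between $k = n/4$ and $n/2$ is exactly what makes $\alpha$ bounded away from $1$). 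Hence $|\mathcal{F}| \le \alpha^n \binom{n}{n/4}$.

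Finally I would combine the pieces: with $M \le \alpha^n \binom{n}{n/4}$ as the per-color-class bound, the number of colors is at least $\binom{n}{n/4} / M \ge \alpha^{-n}$, which is exponential in $n$ and in particular exceeds $n/10$ for all sufficiently large $n$ divisible by $4$; the small remaining values of $n$ can be checked directly. I expect the delicate part to be pinning down the constant $\alpha < 1$ cleanly, which amounts to estimating the fraction of $(n/4)$-subsets that intersect a fixed $(n/4)$-subset (equivalently, $1$ minus the probability of disjointness) via a short entropy or binomial-tail computation; once $\alpha$ is in hand the conclusion is immediate.
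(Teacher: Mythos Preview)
Your approach has a genuine gap: the central claim that an intersecting family $\mathcal{F}$ of $(n/4)$-subsets of $[n]$ satisfies $|\mathcal{F}|\le \alpha^n\binom{n}{n/4}$ for some constant $\alpha<1$ is \emph{false}. The Erd\H{o}s--Ko--Rado bound you derived first, $|\mathcal{F}|\le \tfrac14\binom{n}{n/4}$, is in fact tight: the star $\{A\in\binom{[n]}{n/4}:1\in A\}$ is intersecting and has size exactly $\binom{n-1}{n/4-1}=\tfrac14\binom{n}{n/4}$. So no per-color-class bound better than $\tfrac14\binom{n}{n/4}$ is available, and the simple pigeonhole step $\chi\ge \binom{n}{n/4}/M$ can never give more than $4$ colors. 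Your probabilistic heuristic also points the wrong way: the probability that a random $(n/4)$-set meets a fixed $(n/4)$-set is \emph{close to $1$} (the probability of disjointness is $\binom{3n/4}{n/4}/\binom{n}{n/4}$, which is exponentially small), so ``$A$ meets a fixed $F$'' is not a restrictive event.

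The paper's proof avoids this obstruction by \emph{not} bounding the color classes individually. Instead it passes to the up-closures $F_i=\{S\subseteq[n]:S\supseteq V\text{ for some }V\in V_i\}$, which are still intersecting, and applies Kleitman's lemma: a union of $s$ intersecting families in $2^{[n]}$ has size at most $2^n-2^{n-s}$. Since every set of size $\ge n/4$ lies in some $F_i$, the complement of $\bigcup_i F_i$ is contained in $\{S:|S|<n/4\}$, of size at most $2^{nH(1/4)}$. Comparing $2^{n-s}\le 2^{nH(1/4)}$ yields $s\ge n(1-H(1/4))>n/10$. The key point is that the linear lower bound comes from how $s$ intersecting families jointly cover $2^{[n]}$, not from how large any single one can be.
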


The first step in the proof is the following lemma proved by Kleitman~\cite{Kleitman66}.
A family $F \subseteq 2^X$ is called intersecting if
$f \cap f' \neq \emptyset$ for all $f,f' \in F$.

\begin{lemma}
If $F_1,\ldots,F_s \subset 2^{[n]}$, where each $F_i$ is intersecting, 
then
$$\Big| \bigcup_{i \in [s]} F_i \Big| \leq 2^n-2^{n-s}.$$
\end{lemma}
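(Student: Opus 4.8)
The plan is to reduce the statement to a standard positive-correlation inequality for monotone families. First I would replace each $F_i$ by its up-closure $U_i = \{A \subseteq [n] : A \supseteq A' \text{ for some } A' \in F_i\}$. Since $F_i \subseteq U_i$, this only enlarges the union, so it suffices to prove the bound for the up-sets $U_i$. The key point is that taking the up-closure preserves the intersecting property: if $A \supseteq A' \in F_i$ and $B \in F_i$, then $A \cap B \supseteq A' \cap B \neq \emptyset$. Thus each $U_i$ is an \emph{increasing} intersecting family, and I may assume from the start that every $F_i$ is an up-set.

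Next I would pass to complements. For an up-set $U$, being intersecting is equivalent to containing at most one set from each complementary pair $\{A, A^c\}$: if both $A, A^c \in U$ then $A \cap A^c = \emptyset$ contradicts intersecting, and conversely if $A, B \in U$ are disjoint then $B \subseteq A^c$ forces $A^c \in U$ (as $U$ is increasing), again producing a complementary pair. Consequently the complement family $G_i = 2^{[n]} \setminus U_i$ is a \emph{decreasing} family (down-set) that contains at least one set from each of the $2^{n-1}$ complementary pairs, so $|G_i| \geq 2^{n-1}$; that is, $\mu(G_i) \geq 1/2$, where $\mu(\mathcal{A}) = |\mathcal{A}|/2^n$.

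The final step is the correlation inequality: an intersection of decreasing families on $2^{[n]}$ is positively correlated, so $\mu\bigl(\bigcap_i G_i\bigr) \geq \prod_i \mu(G_i)$. This follows from the Harris--Kleitman inequality (two decreasing events are positively correlated under the uniform measure on $2^{[n]}$) together with an induction on $s$, using that a finite intersection of down-sets is again a down-set. Combining with $\mu(G_i) \geq 1/2$ gives $\mu\bigl(\bigcap_i G_i\bigr) \geq 2^{-s}$, hence $\bigl|\bigcap_i G_i\bigr| \geq 2^{n-s}$. Since $\bigcup_i U_i = 2^{[n]} \setminus \bigcap_i G_i$, I conclude $\bigl|\bigcup_i F_i\bigr| \leq \bigl|\bigcup_i U_i\bigr| = 2^n - \bigl|\bigcap_i G_i\bigr| \leq 2^n - 2^{n-s}$.

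The main obstacle is the correlation inequality itself; the monotonization and complementation steps are routine bookkeeping. If a self-contained argument is wanted, the Harris--Kleitman inequality can be proved by induction on $n$, conditioning on a single coordinate and exploiting the nestedness of the two slices of a down-set; this is the only genuinely nontrivial ingredient, and everything else is elementary.
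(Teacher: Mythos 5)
Your proof is correct and follows essentially the same route as the paper's sketch: monotonize, observe that an increasing intersecting family occupies at most one set of each complementary pair (giving the $s=1$ bound $2^{n-1}$), and then iterate the Harris--Kleitman correlation inequality for monotone events over the $s$ families. The paper compresses all of this into ``induction on $s$ plus correlation of monotone events,'' so your write-up is just a fully spelled-out version of the intended argument.
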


The lemma can proved by induction on $s$.
The case $s=1$ just says that an intersecting family has size at most $2^{n-1}$.
The induction step is based on correlation of monotone events
(for more details see, e.g.~\cite{as}).

\begin{proof}[Proof of Theorem~\ref{thm:KGnn4}]
Consider a proper coloring of $KG_{n,n/4}$ with $s$ colors.
Let $V_1,\ldots,V_s$ be the partition of the vertices to color classes.
Each $V_i$ is an intersecting family.
Let $F_i$ be the family of sets $u \subseteq [n]$ that contain
some set in $V_i$.
Each $F_i$ is also intersecting.
By the lemma above,
$$2^n -  \Big| \bigcup_{i \in [s]} F_i \Big| \geq 2^{n-s}.$$
On the other hand, 
the complement of $\bigcup_{i \in [s]} F_i$ is of size less than
$\sum_{k=0}^{n/4}{n \choose k} \leq 2^{n H(1/4)}$,
where $H(p) = - p \log (p)-(1-p) \log(1-p)$ is the binary entropy function.
Hence,
$$s > n(1-H(1/4)) \geq n /10.$$
\end{proof}

\end{document}